\title{Positivity, complex FIOs, and Toeplitz operators}
\author{Lewis A. Coburn \\\small Department of Mathematics \\\small SUNY at Buffalo
\\\small Buffalo \\\small NY 14260, USA\\\small lcoburn@buffalo.edu \and
Michael Hitrik \\\small Department of Mathematics \\\small University of California
\\\small Los Angeles \\\small CA 90095-1555, USA\\\small hitrik@math.ucla.edu \and
Johannes Sj\"ostrand \\\small IMB, Universit\'e de Bourgogne \\\small 9, Av. A. Savary, BP 47870
\\\small FR-21078 Dijon, France\\\small and UMR 5584 CNRS
\\\small johannes.sjostrand@u-bourgogne.fr}
\date{}
\def\wrtext#1{\relax\ifmmode{\leavevmode\hbox{#1}}\else{#1}\fi}
\def\abs#1{\left|#1\right|}
\def\begeq{\begin{equation}}
\def\endeq{\end{equation}}
\def\iint{\int\hskip -2mm\int}
\def\Re{{\rm Re\,}}
\def\Im{{\rm Im\,}}
\def\part#1{\frac{\partial}{\partial #1}}
\def\norm#1{||\,#1\,||}
\newcommand{\real}{\mbox{\bf R}}
\newcommand{\comp}{\mbox{\bf C}}
\renewcommand{\Re}{\mbox{\rm Re\,}}
\renewcommand{\Im}{\mbox{\rm Im\,}}
\renewcommand{\exp}{\mbox{\rm exp\,}}
\newtheorem{dref}{Definition}[section]
\newtheorem{theo}[dref]{Theorem}
\newtheorem{prop}[dref]{Proposition}
\newenvironment{proof}{\vspace{.3cm}\noindent{{\em Proof:}}}{\hfill$\Box$}
\begin{document}

\maketitle

\vspace*{1cm}
\noindent
{\bf Abstract}: We establish a characterization of complex linear canonical transformations that are positive with respect to a pair of strictly plurisubharmonic quadratic weights. As an application, we show that the boundedness of a class of Toeplitz operators on the Bargmann space is implied by the boundedness of their Weyl symbols.

\vskip 2.5mm
\noindent {\bf Keywords and Phrases:} Positive Lagrangian plane, positive canonical transformation, strictly plurisubharmonic quadratic form, Fourier integral operator in the complex domain, Toeplitz operator.

\vskip 2mm
\noindent
{\bf Mathematics Subject Classifi\-ca\-tion 2000}: 32U05, 32W25, 35S30, 47B35, 70H15

%\begin{abstract}
%\medskip\centerline{\bf R\'esum\'e}
%
%\end{abstract}

\tableofcontents
\section{Introduction and statement of results}
\setcounter{equation}{0}
The notion of a positive complex Lagrangian manifold, introduced by H\"or\-man\-der~\cite{H71}, has long played an important r\^ole in microlocal analysis and spectral theory. Restricting the attention to the linear case, relevant for this work, let us recall that a complex Lagrangian plane $\Lambda \subset \comp^{2n}$ is said to be positive if we have
\begeq
\label{eq1.1}
\frac{1}{i} \sigma(\rho, {\cal C}(\rho)) \geq 0,\quad \rho \in \Lambda.
\endeq
Here $\sigma$ is the complex symplectic form on $\comp^{2n}$ and ${\cal C}: \comp^{2n} \rightarrow \comp^{2n}$ is the antilinear map of complex conjugation.  Let us mention here several familiar problems, where considerations of positive Lagrangian manifolds are essential. These include the spectral analysis and resolvent estimates for elliptic quadratic differential operators~\cite{Sj74},~\cite{HiSjVi}, the study of spectral instability and pseudospectra for semiclassical non-normal operators~\cite{H60},~\cite{DSZ}, as well as the construction of Gaussian beam quasimodes for semiclassical selfadjoint operators of principal type, associated with closed elliptic trajectories~\cite{R76},~\cite{BB}.

\bigskip
\noindent
The work~\cite{Sj82} by the third named author introduced and developed the notion of positivity of a complex Lagrangian space relative to a strictly plurisubharmonic quadratic weight, which is the starting point for the present work. To recall this notion, we let $\Phi_0$ be a strictly plurisubharmonic quadratic form on $\comp^n$ and let us introduce the real linear subspace
\begeq
\label{eq1.2}
\Lambda_{\Phi_0} = \left\{\left(x,\frac{2}{i}\frac{\partial{\Phi_0}}{\partial x}(x)\right),\,\, x\in \comp^n\right\} \subset \comp^{2n}.
\endeq
We can view $\Lambda_{\Phi_0}$ as the image of the real phase space $\real^{2n}$ under a suitable complex linear canonical transformation, and in particular we notice that $\Lambda_{\Phi_0}$ is maximally totally real. In analogy with the discussion above, we say that a complex linear Lagrangian space $\Lambda \subset \comp^{2n}$ is positive relative to $\Lambda_{\Phi_0}$ provided that the natural analog of (\ref{eq1.1}) holds,
\begeq
\label{eq1.3}
\frac{1}{i} \sigma(\rho, \iota_{\Phi_0}(\rho)) \geq 0,\quad \rho \in \Lambda.
\endeq
Here the map of complex conjugation ${\cal C}$ has been replaced by the unique antilinear involution $\iota_{\Phi_0}: \comp^{2n} \rightarrow \comp^{2n}$ such that $\iota_{\Phi_0}|_{\Lambda_{\Phi_0}} = 1$. A result of~\cite{Sj82} establishes a complete characterization of complex Lagrangians that are positive relative to $\Lambda_{\Phi_0}$ --- see also Theorem \ref{theo_first} below.

\medskip
\noindent
In this work, we shall be mainly concerned with positive complex canonical transformations. Indeed, the main goal of the present work is to provide a characterization of positive complex linear canonical transformations relative to plurisubharmonic weights, and to consider Fourier integral operators (FIOs) in the complex domain associated to positive canonical transformations, establishing a link between such operators and Toeplitz operators. In particular, it seems that the point of view of complex FIOs allows us to shed some new light on some basic questions in the theory of Toeplitz operators. We would like to emphasize here that the original motivation for attempting to establish a link between FIOs in the complex domain and Toeplitz operators came from a talk delivered by the first named author at the conference "Complex and functional analysis and their interactions with harmonic analysis", at the Mathematical Research and Conference Center, B\c{e}dlewo, June 2017.

\bigskip
\noindent
We shall now proceed to define the notion of a complex linear canonical transformation which is positive relative to a strictly plurisubharmonic quadratic weight, and to state our main results. In fact, proceeding in the spirit of the discussion above, it will be more transparent to introduce the notion of positivity relative to a pair of strictly plurisubharmonic quadratic forms rather than relative to a single one. Thus, let $\Phi_1$, $\Phi_2$ be two strictly plurisubharmonic quadratic forms on $\comp^n$ with the corresponding antilinear involutions $\iota_{\Phi_1}$, $\iota_{\Phi_2}$. Let $\kappa: \comp^{2n} \rightarrow \comp^{2n}$ be a complex linear canonical transformation, $\kappa^* \sigma = \sigma$. We say that $\kappa$ is positive relative to $(\Lambda_{\Phi_1}, \Lambda_{\Phi_2})$ provided that
\begeq
\label{eq1.4}
\frac{1}{i} \biggl(\sigma(\kappa(\rho), \iota_{\Phi_1} \kappa(\rho)) - \sigma(\rho, \iota_{\Phi_2}(\rho))\biggr) \geq 0,\quad \rho \in \comp^{2n}.
\endeq
The positivity of $\kappa$ relative to $(\Lambda_{\Phi_1},\Lambda_{\Phi_2})$ is said to be strict provided that the inequality in (\ref{eq1.4}) is strict for all $0\neq \rho \in \comp^{2n}$. Let us remark that in the case when the positivity is taken relative to the real phase space $\real^{2n}$, see (\ref{eq1.1}), such canonical transformations were studied in~\cite{H83},~\cite{H95}, see also the recent works~\cite{PRW},~\cite{AV}.

\medskip
\noindent
We can now state the first main result of this work.

\begin{theo}
\label{theo_main}
Let $\kappa: \comp^{2n} \rightarrow \comp^{2n}$ be a complex linear canonical transformation and let $\Phi_1$, $\Phi_2$ be strictly plurisubharmonic quadratic forms on $\comp^{n}$. The canonical transformation $\kappa$ is positive relative to $(\Lambda_{\Phi_1},\Lambda_{\Phi_2})$ precisely when we have \begeq
\label{eq1.5}
\kappa(\Lambda_{\Phi_2}) = \Lambda_{\Phi},
\endeq
where $\Phi$ is a strictly plurisubharmonic quadratic form such that $\Phi \leq \Phi_1$.
\end{theo}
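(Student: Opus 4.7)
The plan is to reduce the theorem to the earlier characterization of positive Lagrangians (Theorem \ref{theo_first}) by studying the image $\kappa(\Lambda_{\Phi_2})$ and exploiting a key identity relating the fundamental real-quadratic forms
\[
F_{\Phi_j}(\rho) := \frac{1}{i}\sigma(\rho, \iota_{\Phi_j}\rho), \quad \rho \in \comp^{2n}, \quad j=1,2,
\]
in terms of which the positivity condition (\ref{eq1.4}) reads $\kappa^* F_{\Phi_1} - F_{\Phi_2} \geq 0$ on $\comp^{2n}$.

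For the necessity direction, I would restrict (\ref{eq1.4}) to $\rho \in \Lambda_{\Phi_2}$. Since $\iota_{\Phi_2}\rho = \rho$ there and $\sigma$ is antisymmetric, $F_{\Phi_2}$ vanishes identically on $\Lambda_{\Phi_2}$, leaving
\[
\frac{1}{i}\sigma(\kappa\rho,\iota_{\Phi_1}\kappa\rho) \geq 0, \quad \rho \in \Lambda_{\Phi_2}.
\]
Because $\kappa$ preserves the complex symplectic form, $\kappa(\Lambda_{\Phi_2})$ is again an I-Lagrangian, R-symplectic subspace, and the above inequality says precisely that it is positive relative to $\Lambda_{\Phi_1}$. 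Theorem \ref{theo_first} then delivers a strictly plurisubharmonic quadratic form $\Phi$ on $\comp^n$ with $\kappa(\Lambda_{\Phi_2}) = \Lambda_\Phi$ and $\Phi \leq \Phi_1$.

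For the sufficiency direction, assume $\kappa(\Lambda_{\Phi_2}) = \Lambda_\Phi$ with $\Phi$ strictly plurisubharmonic and $\Phi \leq \Phi_1$. The pivotal step is the identity $\kappa^* F_\Phi = F_{\Phi_2}$, which I would verify as follows. Decompose any $\rho \in \comp^{2n}$ as $\rho = \rho' + i\rho''$ with $\rho',\rho'' \in \Lambda_{\Phi_2}$, using that $\Lambda_{\Phi_2}$ is maximally totally real. Then $\iota_{\Phi_2}\rho = \rho' - i\rho''$ and antisymmetry of $\sigma$ gives $F_{\Phi_2}(\rho) = -2\sigma(\rho',\rho'')$. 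Complex linearity of $\kappa$ yields $\kappa\rho = \kappa\rho' + i\kappa\rho''$ with $\kappa\rho',\kappa\rho'' \in \Lambda_\Phi$, and the same calculation combined with $\kappa^*\sigma = \sigma$ produces $F_\Phi(\kappa\rho) = -2\sigma(\rho',\rho'') = F_{\Phi_2}(\rho)$. Consequently
\[
\kappa^* F_{\Phi_1} - F_{\Phi_2} = \kappa^*(F_{\Phi_1} - F_\Phi),
\]
and since $\kappa$ is a bijection of $\comp^{2n}$, this is nonnegative if and only if $F_{\Phi_1} - F_\Phi \geq 0$ on $\comp^{2n}$, that is, the identity transformation is positive relative to $(\Lambda_{\Phi_1},\Lambda_\Phi)$.

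The main obstacle is therefore the identity case: passing from the base-space inequality $\Phi \leq \Phi_1$ on $\comp^n$ to the phase-space inequality $F_{\Phi_1} - F_\Phi \geq 0$ on $\comp^{2n}$. I would handle it by computing $F_\Phi(x,\xi)$ explicitly relative to the splitting $\comp^{2n} = \Lambda_\Phi \oplus i\Lambda_\Phi$---equivalently, in terms of $\Phi$ and a Legendre-dual object controlled by $\Phi$---so as to obtain a formula depending monotonically on the weight. Strict plurisubharmonicity of both $\Phi$ and $\Phi_1$ makes the relevant splittings non-degenerate and secures the monotonicity that converts $\Phi \leq \Phi_1$ into $F_\Phi \leq F_{\Phi_1}$.
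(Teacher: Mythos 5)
Your reduction of the sufficiency direction via the identity $\kappa^* F_\Phi = F_{\Phi_2}$ (equivalently $\iota_\Phi\circ\kappa=\kappa\circ\iota_{\Phi_2}$ on $\comp^{2n}$, the analogue of (\ref{eq2.3.901})) is correct and is a clean way to isolate the essential difficulty. However, both halves of your argument stop exactly where the real work begins. In the necessity direction, Theorem \ref{theo_first} cannot be applied to $\kappa(\Lambda_{\Phi_2})$: that theorem concerns \emph{complex} Lagrangian planes and produces a \emph{pluriharmonic} weight $\Psi\le\Phi_1$, whereas $\kappa(\Lambda_{\Phi_2})$ is only a real-linear, maximally totally real (I-Lagrangian, R-symplectic) subspace. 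What you actually need is the analogous statement for such subspaces --- that nonnegativity of $\frac{1}{i}\sigma(\cdot,\iota_{\Phi_1}\cdot)$ on $\kappa(\Lambda_{\Phi_2})$ forces $\kappa(\Lambda_{\Phi_2})=\Lambda_\Phi$ with $\Phi$ \emph{strictly plurisubharmonic} and $\Phi\le\Phi_1$ --- and that is essentially the theorem itself. The paper proves it in three separate steps: transversality to the fiber (so that a generating function $\Phi$ exists), plurisubharmonicity of $\Phi$ via a deformation/signature argument combined with the fundamental lemma of~\cite{Sj82} applied to the critical-value formula (\ref{eq2.19}), and $\Phi\le\Phi_1$ via the generating function of Proposition \ref{gen_f} and the Cauchy--Schwarz estimate leading from (\ref{eq2.23}) to (\ref{eq2.27}). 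None of this is supplied by your appeal to Theorem \ref{theo_first}.

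In the sufficiency direction, the step you defer --- that $\Phi\le\Phi_1$ on $\comp^n$ implies $F_{\Phi_1}-F_\Phi\ge0$ on all of $\comp^{2n}$ --- is precisely the $\kappa=\mathrm{id}$ case of Theorem \ref{theo_main} (see the paper's remark that this case already yields an invariant notion of positivity of one weight relative to another), and it is not a routine monotonicity: in the explicit formula (\ref{eq2.21.3}) the weight enters both through $L$ and through $-L^{-1}$ and $A$, so there is no termwise monotone dependence on $\Phi_1$. The paper's route is to represent a quantization of $\kappa$ by a Bergman-type kernel $e^{2\Psi(x,\overline{y})}$, prove $2\Re\Psi(x,\overline{y})\le\Phi(x)+\Phi_2(y)$ (Proposition \ref{prop1}, itself a nontrivial argument about a strictly plurisubharmonic form vanishing to second order on a maximally totally real subspace), and then show in Proposition \ref{prop2} that positivity of $\kappa$ relative to $(\Lambda_{\Phi_1},\Lambda_{\Phi_2})$ is \emph{equivalent} to $2\Re\Psi(x,\overline{y})\le\Phi_1(x)+\Phi_2(y)$; only at that point does the hypothesis $\Phi\le\Phi_1$ enter harmlessly. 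You would need to supply an argument of comparable substance in place of your ``Legendre-dual'' computation; as written, the claimed monotonicity is asserted, not proved.
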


\medskip
\noindent
{\it Remark.} The definition (\ref{eq1.4}) of a positive canonical transformation is a direct adaptation of the corresponding notion of positivity due to H\"ormander~\cite{H83},~\cite{H95}, to the weighted setting. One advantage of the consideration of the general case of a pair of weights $\Phi_1$, $\Phi_2$, is that we can let $\kappa$ be the identity in (\ref{eq1.4}) and get an invariant notion of the positivity of one plurisubharmonic weight compared to another, in view of Theorem \ref{theo_main}.

\bigskip
\noindent
Our second main result is concerned with applications of Theorem \ref{theo_main} to the study of Toeplitz operators in the Bargmann space
$$
H_{\Phi_0}(\comp^n) = L^2(\comp^n, e^{-2\Phi_0} L(dx)) \cap {\rm Hol}(\comp^n),
$$
where $\Phi_0$ is a strictly plurisubharmonic quadratic form on $\comp^n$. Specifically, we shall be concerned with the continuity properties of (in general unbounded) Toeplitz operators of the form
\begeq
\label{eq1.6}
{\rm Top}(e^{2q}) = \Pi_{\Phi_0} \circ e^{2q} \circ \Pi_{\Phi_0}: H_{\Phi_0}(\comp^n) \rightarrow H_{\Phi_0}(\comp^n),
\endeq
where $q$ is a complex-valued quadratic form on $\comp^n$ and
$$
\Pi_{\Phi_0}: L^2(\comp^n, e^{-2\Phi_0} L(dx)) \rightarrow H_{\Phi_0}(\comp^n)
$$
is the orthogonal projection. Sufficient conditions for the boundedness of ${\rm Top}(e^{2q})$ are provided in the following result.

\begin{theo}
\label{T_bound}
Let $\Phi_0$ be a strictly plurisubharmonic quadratic form on $\comp^n$ and let $q$ be a quadratic form on $\comp^n$ such that
\begeq
\label{eq1.7}
2{\rm Re}\, q(x) < \Phi_{\rm herm}(x) := (1/2)\left(\Phi_0(x) + \Phi_0(ix)\right),\quad x \neq 0
\endeq
and
\begeq
\label{eq1.8}
\partial_x \partial_{\overline{x}} \left(\Phi_0 - q\right) \neq 0.
\endeq
Let $a\in C^{\infty}(\Lambda_{\Phi_0})$ be the Weyl symbol of the Toeplitz operator ${\rm Top}(e^{2q})$. Assume that $a \in L^{\infty}(\Lambda_{\Phi_0})$. Then the Toeplitz operator
$$
{\rm Top}(e^{2q}): H_{\Phi_0}(\comp^n) \rightarrow H_{\Phi_0}(\comp^n)
$$
is bounded.
\end{theo}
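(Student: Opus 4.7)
The idea is to realize ${\rm Top}(e^{2q})$ as a complex Fourier integral operator with a quadratic phase, apply Theorem \ref{theo_main} to identify its natural target Bargmann space, and then combine this geometric information with the $L^\infty$ bound on the Weyl symbol $a$ to derive the desired operator bound on $H_{\Phi_0}$.

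The first step is to use the explicit Gaussian form of the Bergman kernel of $\Pi_{\Phi_0}$ to write ${\rm Top}(e^{2q})$ as an integral operator whose kernel is a double Gaussian integral. The inner integration, in the variable where the factor $e^{2q}$ sits, is absolutely convergent precisely under (\ref{eq1.7}), since the real part of the exponent of the integrand is dominated by $2\,{\rm Re}\,q-\Phi_{\rm herm}$, which is strictly negative away from the origin. Carrying out this integration produces a reduced kernel of the form $c\cdot e^{2\Psi(x,\overline{z})}$, with $\Psi$ a holomorphic quadratic form in $(x,\overline{z})$. Condition (\ref{eq1.8}) translates into non-degeneracy of the mixed Hessian $\partial_x\partial_{\overline{z}}\Psi$, so the resulting quadratic phase is non-degenerate and determines a complex linear canonical transformation $\kappa:\comp^{2n}\to\comp^{2n}$.

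Next, a direct computation relating the quadratic form in the exponent of the reduced kernel to the symplectic pairing shows that the strict positivity hypothesis (\ref{eq1.7}) is equivalent to strict positivity of $\kappa$ relative to the pair $(\Lambda_{\Phi_0},\Lambda_{\Phi_0})$ in the sense of (\ref{eq1.4}). Theorem \ref{theo_main} then yields a strictly plurisubharmonic quadratic form $\Phi\leq \Phi_0$ on $\comp^n$ with $\kappa(\Lambda_{\Phi_0})=\Lambda_{\Phi}$. Tracking the Gaussian normalizations from the kernel reduction carefully, one obtains a factorization ${\rm Top}(e^{2q})=U_\kappa\circ{\rm Op}^w(a)$, where $U_\kappa:H_{\Phi_0}\to H_{\Phi}$ is the standard unitary FIO quantizing $\kappa$ and ${\rm Op}^w(a)$ is the Weyl quantization on $\Lambda_{\Phi_0}$ of the Weyl symbol $a$ of ${\rm Top}(e^{2q})$. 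Since $\Phi\leq\Phi_0$, the inclusion $H_{\Phi}\hookrightarrow H_{\Phi_0}$ is continuous with norm one, so $U_\kappa$ is bounded viewed as $H_{\Phi_0}\to H_{\Phi_0}$. A Calder\'on--Vaillancourt-type estimate for the Weyl calculus on weighted Bargmann spaces, applied to the assumption $a\in L^\infty(\Lambda_{\Phi_0})$, gives boundedness of ${\rm Op}^w(a)$ on $H_{\Phi_0}$, and the composition is therefore bounded.

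The main technical obstacle is the clean identification, in the factorization ${\rm Top}(e^{2q})=U_\kappa\circ{\rm Op}^w(a)$, of the modulating factor with the Weyl symbol $a$ appearing in the hypothesis; this requires matching the Gaussian normalizations arising from the kernel reduction of the first step with the normalizations defining the Weyl quantization on $\Lambda_{\Phi_0}$, and relies in an essential way on the non-degeneracy condition (\ref{eq1.8}) to ensure that $a$ is a genuine smooth function on $\Lambda_{\Phi_0}$. Once this identification is in place, Theorem \ref{theo_main} supplies exactly the geometric input $\Phi\leq \Phi_0$ needed to translate the $L^\infty$ bound on $a$ into boundedness of ${\rm Top}(e^{2q})$ on $H_{\Phi_0}$.
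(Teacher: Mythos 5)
Your overall strategy --- realize ${\rm Top}(e^{2q})$ as a metaplectic FIO with quadratic phase, use the non-degeneracy (\ref{eq1.8}) to obtain a canonical transformation $\kappa$, and invoke Theorem \ref{theo_main} to land in $H_{\Phi}$ with $\Phi\leq\Phi_0$ --- is the right skeleton, but the step where you verify positivity of $\kappa$ is wrong. You claim that the hypothesis (\ref{eq1.7}) is \emph{equivalent} to (strict) positivity of $\kappa$ relative to $(\Lambda_{\Phi_0},\Lambda_{\Phi_0})$. It is not: (\ref{eq1.7}) only guarantees that ${\rm Top}(e^{2q})$ is densely defined. The example worked out in Section \ref{sect_four}, with $\Phi_0(x)=|x|^2/2$ and $q=(\lambda/2)|y|^2$, makes this explicit: there (\ref{eq1.7}) reads ${\rm Re}\,\lambda<1/2$, while positivity of the associated $\kappa$ in (\ref{eq4.14}) holds precisely when $|1-\lambda|\geq 1$; taking, say, $\lambda=1/4$ satisfies (\ref{eq1.7}) but violates (\ref{eq4.18}), and the corresponding Toeplitz operator is unbounded. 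The positivity of $\kappa$ must instead be extracted from the hypothesis $a\in L^\infty(\Lambda_{\Phi_0})$, which your argument never uses at the geometric level. The correct mechanism is: the Weyl symbol computed from (\ref{eq4.4}) is an exact Gaussian, $a=C\exp(iF)$ with $F$ a holomorphic quadratic form, so $a\in L^\infty(\Lambda_{\Phi_0})$ if and only if ${\rm Im}\,F|_{\Lambda_{\Phi_0}}\geq 0$, and Proposition \ref{prop_Weyl} identifies this last condition with the positivity of $\kappa$ relative to $\Lambda_{\Phi_0}$. Note also that strictness of the positivity cannot be expected: the boundary case $|1-\lambda|=1$ yields a bounded (indeed unitary) operator whose $\kappa$ is only non-strictly positive.

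The second defect is the factorization ${\rm Top}(e^{2q})=U_\kappa\circ{\rm Op}^w(a)$ followed by a ``Calder\'on--Vaillancourt-type estimate'' for ${\rm Op}^w(a)$. First, by (\ref{eq4.3}) one has ${\rm Top}(e^{2q})={\rm Op}^w(a)$ itself; there is no residual unitary factor to split off, and inserting one double-counts the operator. Second, no Calder\'on--Vaillancourt theorem applies to a symbol that is merely $L^\infty$: for $a=Ce^{iF}$ the gradient $\nabla a=iC(\nabla F)e^{iF}$ is unbounded on the zero set of ${\rm Im}\,F|_{\Lambda_{\Phi_0}}$, since $\nabla F$ is linear. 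If $L^\infty$ control of the symbol alone sufficed via a symbol calculus, the Berger--Coburn conjecture discussed after the statement of the theorem would be immediate and the whole FIO apparatus unnecessary. The boundedness has to come from the geometry: once $\kappa$ is positive, Theorem \ref{theo_main} gives $\kappa(\Lambda_{\Phi_0})=\Lambda_\Phi$ with $\Phi\leq\Phi_0$ strictly plurisubharmonic, the FIO is bounded $H_{\Phi_0}(\comp^n)\rightarrow H_\Phi(\comp^n)$ by the good-contour realization of Section \ref{sect_three}, and $H_\Phi(\comp^n)\subset H_{\Phi_0}(\comp^n)$ continuously.
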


\medskip
\noindent
{\it Remark}. Let us remark that Theorem \ref{T_bound} is closely related to the conjecture of~\cite{BC94},~\cite{LC}, stating that a Toeplitz operator is bounded on $H_{\Phi_0}(\comp^n)$ precisely when its Weyl symbol is bounded on $\Lambda_{\Phi_0}$. Theorem \ref{T_bound} can therefore be regarded as establishing the sufficiency part of the conjecture in the special case when the Toeplitz symbol is of the form $\exp(2q)$, where $q$ is a complex valued quadratic form on ${\bf C}^n$, satisfying (\ref{eq1.7}), (\ref{eq1.8}).

\medskip
\noindent
{\it Remark}. As we shall see in Section \ref{sect_four}, the condition (\ref{eq1.7}) guarantees that the operator ${\rm Top}(e^{2q})$ is densely defined. Notice also that the Hermitian form $\Phi_{\rm herm}$ in (\ref{eq1.7}) is positive definite on $\comp^n$, thanks to the strict plurisubharmonicity of $\Phi_0$.

\bigskip
\noindent
The plan of the paper is as follows. In Section \ref{sect_two}, we establish the necessity part of Theorem \ref{theo_main}, by means of direct geometric arguments, relying on some general results of~\cite{Sj82}, see also~\cite{CGHS},~\cite{HiSj15}. The proof of Theorem \ref{theo_main} is completed in Section \ref{sect_three}, where we have found it convenient to introduce explicitly a Fourier integral operator in the complex domain quantizing the canonical transformation $\kappa$ satisfying (\ref{eq1.5}), when verifying the positivity of $\kappa$. Applications to Toeplitz operators are given in Section \ref{sect_four}, where Theorem \ref{T_bound} is established. Appendix \ref{appA} is devoted to some elementary remarks concerning integral representations for linear continuous maps between weighted spaces of holomorphic functions, which can be regarded as a version of the Schwartz kernel theorem in this setting. These representations are to be applied in the main text when deriving a Bergman type representation for our complex FIOs. Finally, Appendix \ref{appB}, for the use in Section \ref{sect_four}, characterizes boundedness properties of operators given as Weyl quantizations of symbols of the form $e^{iF(x,\xi)}$, where $F$ is a holomorphic quadratic form on $\comp^{2n}$.

\medskip
\noindent
{\bf Acknowledgements}. The second named author would like to express his sincere and profound gratitude to the Institut de Math\'ematiques de Bourgogne at the Universit\'e de Bourgogne for the kind hospitality in August-September 2017, where part of this project was conducted.

\section{Positive Lagrangian planes and positive canonical transformations in the $H_{\Phi}$--set\-ting}
\label{sect_two}
\setcounter{equation}{0}
\noindent
Let $\Phi_0$ be a strictly plurisubharmonic quadratic form on $\comp^n$. Associated to $\Phi_0$ is the I-Lagrangian R-symplectic linear manifold
$\Lambda_{\Phi_0}$, given by
\begeq
\label{eq2.1}
\Lambda_{\Phi_0} = \left\{\left(x,\frac{2}{i}\frac{\partial \Phi_0}{\partial x}(x)\right);\, x\in \comp^n\right\} \subset \comp^{2n}.
\endeq
The linear manifold $\Lambda_{\Phi_0}$ is maximally totally real, and we let $\iota_{\Phi_0}$ be the unique antilinear involution
\begeq
\label{eq2.2}
\iota_{\Phi_0}: \comp^{2n} \rightarrow \comp^{2n},
\endeq
such that the restriction of $\iota_{\Phi_0}$ to $\Lambda_{\Phi_0}$ is the identity. For future reference, we may recall the explicit description of the
involution $\iota_{\Phi_0}$ given in~\cite{HiSj15},
\begeq
\label{eq2.3}
\left(y, \frac{2}{i} \left(\Phi''_{0,xx} y + \Phi''_{0,x \bar x}\bar x\right) \right) \mapsto
\left(x, \frac{2}{i}\left(\Phi''_{0,xx} x + \Phi''_{0,x \bar x}\bar y\right) \right).
\endeq
We also have
\begeq
\label{eq2.3.1}
\iota_{\Phi_0}: \left(y, \frac{2}{i} \overline{\partial_y \Psi_0(x,\overline{y})}\right) \mapsto \left(x, \frac{2}{i}
\partial_x \Psi_0(x,\overline{y})\right),
\endeq
where $\Psi_0(x,y)$ is the polarization of $\Phi_0$, i.e., the unique holomorphic quadratic form on $\comp^n_x \times \comp^n_y$, such that $\Psi_0(x,\overline{x}) = \Phi_0(x)$.

\medskip
\noindent
Let $\Lambda \subset \comp^{2n}$ be a $\comp$-Lagrangian space, i.e. a complex linear subspace such that ${\rm dim}_{{\bf C}} \Lambda = n$ and $\sigma|_{\Lambda} = 0$. Here $\sigma$ is the standard symplectic form on $\comp^{2n}$. Let us consider the Hermitian form
\begeq
\label{eq2.3.2}
b(\nu, \mu) = \frac{1}{i} \sigma(\nu, \iota_{\Phi_0}(\mu)),\quad \nu, \mu \in \comp^{2n}.
\endeq
We say that $\Lambda$ is positive relative to $\Lambda_{\Phi_0}$ if the Hermitian form (\ref{eq2.3.2}) is positive semidefinite when restricted to $\Lambda$, \begeq
\label{eq2.3.3}
b(\mu,\mu) \geq 0,\quad \mu \in \Lambda.
\endeq
The positivity is said to be strict if the form $b$ in (\ref{eq2.3.2}) is positive definite along $\Lambda$. As remarked in the introduction, this notion is a direct adaptation of the corresponding notion of positivity due to H\"ormander~\cite{H71} where in place of $(\Lambda_{\Phi_0}, \iota_{\Phi_0})$ we have $(\real^{2n},{\cal C})$, with ${\cal C}$ being the antilinear map of complex conjugation.

\medskip
\noindent
{\it Remark.} It is easy to see and is established in~\cite{CGHS},~\cite{HiSj15} that the Hermitian form $b$ is non-degenerate along $\Lambda$ precisely when $\Lambda$ and $\Lambda_{\Phi_0}$ are transversal.

\medskip
\noindent
Our starting point is the following well known result, see~\cite{Sj82},~\cite{CGHS},~\cite{HiSj15}.
\begin{theo}
\label{theo_first}
A $\comp$-Lagrangian space $\Lambda$ is positive relative to $\Lambda_{\Phi_0}$ if and only if $\Lambda = \Lambda_{\Psi}$, where $\Psi$ is a pluriharmonic quadratic form such that $\Psi \leq \Phi_0$.
\end{theo}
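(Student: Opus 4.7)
The plan is to prove the theorem in three steps: (i) show that positivity of $\Lambda$ forces it to be a graph over $\comp^n_x$; (ii) introduce the associated pluriharmonic generating function $\Psi$, so that $\Lambda = \Lambda_\Psi$; (iii) identify the positivity of the Hermitian form $b$ on $\Lambda_\Psi$ with the pointwise inequality $\Psi \leq \Phi_0$.

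For step (i), I would suppose $(0,\eta) \in \Lambda$ with $\eta \neq 0$. The explicit formula (\ref{eq2.3}) for $\iota_{\Phi_0}$ gives $\iota_{\Phi_0}(0,\eta) = (w,\, (2/i)\Phi''_{0,xx}\,w)$ with $\overline{w} = (i/2)(\Phi''_{0,x\bar x})^{-1}\eta$, and a short computation of the symplectic pairing yields
\[
b\bigl((0,\eta),(0,\eta)\bigr) \;=\; -\tfrac{1}{2}\,\eta^{*}(\Phi''_{0,x\bar x})^{-1}\eta,
\]
which is strictly negative by the positive-definiteness of $\Phi''_{0,x\bar x}$ (strict plurisubharmonicity). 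This contradicts positivity. Hence $\Lambda = \{(x, Mx) : x \in \comp^n\}$ for some $\comp$-linear $M: \comp^n \to \comp^n$, and the Lagrangian condition $\sigma|_\Lambda = 0$ forces $M = M^T$. Setting $\Psi(x) := \Re\bigl((i/2)\,Mx \cdot x\bigr)$, a real pluriharmonic quadratic form satisfying $(2/i)\partial_x\Psi(x) = Mx$, we obtain $\Lambda = \Lambda_\Psi$, completing step (ii).

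Step (iii) is the main obstacle: the Hermitian positivity of $b$ along $\Lambda_\Psi$ and the real-valued dominance $\Psi \leq \Phi_0$ have, a priori, different characters, and their equivalence requires an algebraic identification. To illustrate the mechanism, I would first treat the model case $\Phi_0(x) = |x|^2$, where (\ref{eq2.3}) specializes to $\iota_{\Phi_0}(y, \eta) = \bigl(-(i/2)\overline{\eta},\, -2i\overline{y}\bigr)$, and a direct calculation produces the clean identity
\[
b\bigl((x, Mx),(x, Mx)\bigr) \;=\; 2|x|^{2} - \tfrac{1}{2}|Mx|^{2},\qquad x \in \comp^n,
\]
so positivity on $\Lambda_\Psi$ is equivalent to the operator-norm bound $\|M\| \leq 2$. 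The symmetry of $M$ is essential here: by the Autonne--Takagi decomposition $M = U^T \Sigma U$ (with $U$ unitary and $\Sigma$ diagonal of non-negative singular values), one has $\sup_{|x|=1} |Mx \cdot x| = \|M\|$, and phase averaging then yields $\sup_{|x|=1}\Psi(x) = \tfrac{1}{2}\|M\|$. Hence $\Psi \leq |\cdot|^{2}$ if and only if $\|M\|\leq 2$, completing the forward direction in the model case. The general case follows by expanding $b$ using the polarization formula (\ref{eq2.3.1}), yielding an analogous weighted identity whose positivity matches $\Psi \leq \Phi_0$ via a weighted Autonne--Takagi argument adapted to the pair $(M, \Phi''_{0,x\bar x})$. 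The converse direction is obtained by reversing the key identity: a pluriharmonic $\Psi \leq \Phi_0$ determines a symmetric $M = (2/i)\partial_x\Psi$ with the appropriate norm bound, whence $b \geq 0$ on $\Lambda_\Psi$.
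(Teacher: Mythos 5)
Your steps (i)--(iii) are, in the model case, essentially the paper's own argument: the strict negativity of the fiber forces $\Lambda$ to be a graph $\xi = Mx$ with $M$ symmetric, and with your normalization $\Phi_0(x)=|x|^2$ the identity $b\bigl((x,Mx),(x,Mx)\bigr)=2|x|^2-\tfrac12|Mx|^2$ is correct and reduces positivity to $\norm{M}\le 2$, which you match with $\Psi\le\Phi_0$ by phase averaging together with the fact that $\sup_{|x|=1}|Mx\cdot x|=\norm{M}$ for complex symmetric $M$. Your appeal to the Autonne--Takagi factorization for that last fact is a legitimate substitute for the paper's polarization-plus-scaling argument in (\ref{eq2.3.97})--(\ref{eq2.3.991}); both routes prove the same norm identity, and neither buys much over the other.

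The genuine gap is the reduction of the general case to the model case. You propose ``a weighted Autonne--Takagi argument adapted to the pair $(M,\Phi''_{0,x\bar x})$'', i.e.\ you account only for the Hermitian part of $\Phi_0$. But when $\Phi''_{0,xx}\neq 0$ the pluriharmonic part enters on both sides of the equivalence: the involution (\ref{eq2.3}) contains $\Phi''_{0,xx}$, and the inequality $\Psi\le\Phi_0$ contains the term $\Phi_{\rm plh}(x)={\rm Re}\,(\Phi''_{0,xx}x\cdot x)$, which is \emph{not} invariant under $x\mapsto e^{i\theta}x$; the phase-averaging step on which your entire identification rests is therefore unavailable until this term is removed. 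The repair is precisely the paper's vertical canonical transformation $\kappa_A(y,\eta)=(y,\eta+Ay)$ with $A=\frac{2}{i}\Phi''_{0,xx}$, which conjugates $\iota_{\Phi_0}$ to $\iota_{\Phi_{\rm herm}}$ and simultaneously replaces $\Psi$ by $\Psi-\Phi_{\rm plh}$ (i.e.\ $M$ by $M-A$), preserving both the positivity of $b$ and the inequality $\Psi\le\Phi_0$; a linear change of variables then normalizes $\Phi_{\rm herm}$ and lands you in your model case. As written, your general-case step would fail for any $\Phi_0$ with nonvanishing pluriharmonic part, so you should either carry out this shift explicitly or restrict the direct computation to $\Phi_0$ Hermitian after performing the reduction.
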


\medskip
\noindent
The proof of Theorem \ref{theo_first} given in~\cite{Sj82},~\cite{CGHS},~\cite{HiSj15} discusses the case of strictly positive Lagrangian planes only and depends on the general fact that the set of all $\comp$-Lagrangian spaces which are strictly positive relative to $\Lambda_{\Phi_0}$ is a connected component in the set of all $\comp$-Lagrangian spaces that are transversal to $\Lambda_{\Phi_0}$. Here we shall give a more direct proof, using the explicit description of the involution $\iota_{\Phi_0}$, given in (\ref{eq2.3}), (\ref{eq2.3.1}). Let $\Lambda\subset \comp^{2n}$ be $\comp$-Lagrangian, positive relative to $\Lambda_{\Phi_0}$. It follows from (\ref{eq2.3}), as explained in~\cite{Sj82},~\cite{HiSj15}, that the fiber $\{(0,\xi); \xi\in \comp^n\}$ is strictly negative relative to $\Lambda_{\Phi_0}$, in the sense that the Hermitian form $b$ in (\ref{eq2.3.2}) is negative definite along the fiber, and therefore $\Lambda$ is necessarily of the form $\xi = \partial_x \varphi(x)$, where $\varphi$ is a holomorphic quadratic form on $\comp^n$. It follows that
\begeq
\label{eq2.3.4}
\Lambda = \Lambda_{\Psi},
\endeq
where $\Psi = -{\rm Im}\, \varphi$ is pluriharmonic quadratic. We shall now see that $\Psi \leq \Phi_0$, and to this end, let us consider the decomposition,
\begeq
\label{eq2.3.5}
\Phi_0 = \Phi_{\rm herm} + \Phi_{\rm plh},
\endeq
where
\begeq
\label{eq2.3.6}
\Phi_{\rm herm}(x) = \Phi''_{0, \overline{x}x} x \cdot \overline{x}
\endeq
is positive definite Hermitian and
\begeq
\label{eq2.3.7}
\Phi_{\rm plh}(x) = {\rm Re}\, \left(\Phi''_{0,xx} x \cdot x\right)
\endeq
is pluriharmonic. Let
$$
A = \frac{2}{i} \left(\Phi_{\rm plh}\right)''_{x x} = \frac{2}{i} \left(\Phi_0\right)''_{x x},
$$
and let us consider the complex linear "vertical" canonical transformation
\begeq
\label{eq2.3.8}
\kappa_A(y,\eta) = (y, \eta + Ay).
\endeq
We have
\begeq
\label{eq2.3.9}
\kappa_A(\Lambda_{\Phi_{\rm herm}}) = \Lambda_{\Phi_0},
\endeq
and letting $\iota_{\Phi_{\rm herm}}$ be the antilinear involution associated to $\Lambda_{\Phi_{\rm herm}}$, it is then clear that
\begeq
\label{eq2.3.901}
\iota_{\Phi_{\rm herm}} = \kappa_A^{-1} \circ \iota_{\Phi_0} \circ \kappa_A.
\endeq
It follows that $\Lambda$ is positive relative to $\Lambda_{\Phi_0}$ precisely when
$$
\kappa_A^{-1}(\Lambda) = \Lambda_{\Psi - \Phi_{\rm plh}}
$$
is positive relative to $\Lambda_{\Phi_{\rm herm}}$, and when proving Theorem \ref{theo_first} we may assume therefore that the pluriharmonic part of $\Phi_0$ vanishes. In this discussion, we are also allowed to perform complex linear changes of variables in $\comp^n$, which correspond to canonical transformations of the form $\kappa_C: (y,\eta) \mapsto (C^{-1}y, C^t \eta)$, where $C$ is an invertible complex $n \times n$ matrix. We have $\kappa_C(\Lambda_{\Phi_0}) = \Lambda_{\Phi_1}$, $\Phi_1(x) = \Phi_0(Cx)$, and it follows therefore that when establishing Theorem \ref{theo_first} it suffices to consider the model case when
\begeq
\label{eq2.3.91}
\Phi_0(x) = \frac{\abs{x}^2}{2}.
\endeq
An application of (\ref{eq2.3}) shows that the involution $\iota_{\Phi_0}$ is then given by
\begeq
\label{eq2.3.92}
(y,\eta) \mapsto (\frac{1}{i} \overline{\eta}, \frac{1}{i} \overline{y}),
\endeq
and therefore
\begeq
\label{eq2.3.93}
b(\mu,\mu) = \frac{1}{i} \sigma(\mu, \iota_{\Phi_0}(\mu)) = \abs{x}^2 - \abs{\xi}^2,\quad \mu = (x,\xi) \in \comp^{2n}.
\endeq
When $\mu \in \Lambda = \Lambda_{\Psi}$, we write $\xi = (2/i) \partial_x \Psi(x) = \partial_x \varphi(x)$, $\Psi(x) = -{\rm Im}\, \varphi$, where $\varphi$ is a quadratic holomorphic form, and therefore if $\Lambda$ is positive relative to $\Lambda_{\Phi_0}$, then (\ref{eq2.3.93}) shows that
\begeq
\label{eq2.3.94}
\abs{\varphi''_{xx} x} \leq \abs{x},\quad x \in \comp^n \Longleftrightarrow \norm{\varphi''_{xx}} \leq 1.
\endeq
We get
\begeq
\label{eq2.3.95}
\Psi(x) = -{\rm Im}\, \varphi(x) \leq \frac{\abs{\varphi''_{xx} x\cdot x}}{2} \leq \frac{\abs{x}^2}{2} = \Phi_0(x),\quad x\in \comp^n.
\endeq
Conversely, let $\Lambda$ be $\comp$-Lagrangian of the form $\Lambda = \Lambda_{\Psi}$, where $\Psi$ is pluriharmonic quadratic such that $\Psi \leq \Phi_0$. Let us write $\Psi = -{\rm Im}\, \varphi$, where $\varphi$ is a holomorphic quadratic form. We shall now see that $\Lambda_{\Psi}$ is positive relative to $\Lambda_{\Phi_0}$, and it follows from the remarks above that it suffices to verify the positivity in the model case when $\Phi_0$ is given by (\ref{eq2.3.91}), so that we have
\begeq
\label{eq2.3.96}
\Psi(x) = -{\rm Im}\, \varphi(x) \leq \Phi_0(x) = \frac{\abs{x}^2}{2}.
\endeq
Writing
\begeq
\label{eq2.3.97}
-{\rm Im}\, \varphi''_{xx} x\cdot x \leq \abs{x}^2,
\endeq
replacing $x$ by $e^{i\theta}x$ and varying $\theta \in \real$, we get
\begeq
\label{eq2.3.98}
\abs{\varphi''_{xx} x\cdot x} \leq \abs{x}^2, \quad x\in \comp^n.
\endeq
Next, writing
$$
\varphi''_{xx} x\cdot y = \frac{1}{4}\left(\varphi''_{xx} (x+y)\cdot (x+y) - \varphi''_{xx} (x-y)\cdot (x-y)\right),
$$
we get, using (\ref{eq2.3.98}),
\begeq
\label{eq2.3.99}
\abs{\varphi''_{xx} x\cdot y} \leq \frac{1}{4}\left(\abs{x+y}^2 + \abs{x-y}^2\right) = \frac{1}{2} \left(\abs{x}^2 + \abs{y}^2\right).
\endeq
Replacing $x\mapsto \lambda^{1/2}x$, $y \mapsto \lambda^{-1/2}y$, $\lambda > 0$, we get
\begeq
\label{eq2.3.991}
\abs{\varphi''_{xx} x\cdot y} \leq \frac{1}{2} \left(\lambda \abs{x}^2 + \frac{1}{\lambda} \abs{y}^2\right),
\endeq
and choosing $\lambda = \abs{y}/\abs{x}$, assuming for simplicity that $x\neq 0$, $y\neq 0$, we obtain that
$$
\abs{\varphi''_{xx} x\cdot y} \leq \abs{x} \abs{y}.
$$
Hence, $\norm{\varphi''_{xx}} \leq 1$ and the positivity of $\Lambda_{\Psi}$ relative to $\Lambda_{\Phi_0}$ follows from (\ref{eq2.3.93}), (\ref{eq2.3.94}). The proof of Theorem~\ref{theo_first} is complete.

\bigskip
\noindent
{\it Remark.} Closely related to the proof of Theorem~\ref{theo_first} given above is the normal form for strictly plurisubharmonic quadratic forms, given in Lemma 5.1 of~\cite{H97}, see also~\cite{HW}.

\bigskip
\noindent
Let $\Phi_1$, $\Phi_2$ be two strictly plurisubharmonic quadratic forms on $\comp^n$ and let $\kappa: \comp^{2n} \rightarrow \comp^{2n}$ be a complex linear
canonical transformation which is positive relative to $(\Lambda_{\Phi_1},\Lambda_{\Phi_2})$, in the sense of (\ref{eq1.4}). In the remainder of this section, we shall establish the necessity part of Theorem \ref{theo_main}, while the sufficiency is discussed in Section \ref{sect_three}. To this end, let us observe first that the linear I-Lagrangian R-symplectic manifold $\kappa(\Lambda_{\Phi_2})$ is transversal to the fiber $\{(0,\xi); \, \xi \in \comp^n\}$. Indeed, we have in view of (\ref{eq1.4}),
\begeq
\label{eq2.6}
\frac{1}{i} \sigma(\rho, \iota_{\Phi_1}(\rho)) \geq 0,\quad \rho \in \kappa(\Lambda_{\Phi_2}),
\endeq
while, as recalled above, we know from~\cite{Sj82},~\cite{HiSj15} that the fiber is strictly negative relative to $\Lambda_{\Phi_1}$. It follows that $\kappa(\Lambda_{\Phi_2}) = \Lambda_{\Phi}$, where $\Phi$ is a real quadratic form such that the Levi form $\overline{\partial} \partial \Phi$ is non-degenerate. When verifying that $\Phi$ is (necessarily strictly) plurisubharmonic, we claim that it suffices to do so when the pluriharmonic part of $\Phi_2$ vanishes. Indeed, introducing the decomposition (\ref{eq2.3.5}), with the quadratic form $\Phi_2$ in place of $\Phi_0$ and considering the canonical transformation $\kappa_A$ given in (\ref{eq2.3.8}), we see, using also (\ref{eq2.3.901}), that $\kappa$ is positive relative to $(\Lambda_{\Phi_1},\Lambda_{\Phi_2})$ precisely when $\kappa_A^{-1}\circ \kappa \circ \kappa_A$ is positive relative to
$(\Lambda_{\Phi_1 - \Phi_{2,{\rm plh}}}, \Lambda_{\Phi_{2,{\rm herm}}})$. Here $\Phi_{2,{\rm plh}}$ and $\Phi_{2,{\rm herm}}$ are the pluriharmonic and the Hermitian parts of $\Phi_2$, respectively. Here it is also helpful to notice that
$$
\iota_{\Phi_1 - \Phi_{2,{\rm plh}}} = \kappa_A^{-1} \circ \iota_{\Phi_1} \circ \kappa_A.
$$
To summarize, if we know that the generating function of the linear I-Lagrangian R-symplectic manifold
$$
\kappa_A^{-1}\circ \kappa \circ \kappa_A(\Lambda_{\Phi_{2,{\rm herm}}})
$$
is plurisubharmonic, then the same property is also enjoyed by the generating function of $\kappa(\Lambda_{\Phi_2})$. In what follows we shall assume therefore that
\begeq
\label{eq2.9}
\Phi_{2,xx} = \Phi_{2,\overline{x}\,\overline{x}} = 0.
\endeq
As above, in this discussion, we are also allowed to perform complex linear changes of variables in $\comp^n$, which correspond to canonical transformations of the form $(y,\eta) \mapsto (C^{-1}y, C^t \eta)$, where $C$ is an invertible complex $n \times n$ matrix. Such canonical transformations preserve the plurisubharmonicity of the generating functions, and similarly to the proof of Theorem \ref{theo_first}, it suffices therefore to consider the case when
\begeq
\label{eq2.10}
\Phi_2(x) = \frac{\abs{x}^2}{2}.
\endeq
Theorem \ref{theo_first} then shows that the $\comp$-Lagrangian plane given by $\{(x,\xi)\in \comp^{2n};\, \xi =0\}$ is strictly positive relative to $\Lambda_{\Phi_2}$, and therefore $\kappa(\{(x,\xi)\in \comp^{2n};\, \xi = 0\})$ is strictly positive relative to $\Lambda_{\Phi_1}$, in view of the positivity of $\kappa$. Another application of Theorem \ref{theo_first} gives that
\begeq
\label{eq2.13}
\kappa(\{(x,\xi)\in \comp^{2n};\, \xi = 0\}) = \Lambda_{\Psi},
\endeq
where the quadratic form $\Psi$ is pluriharmonic, with $\Psi \leq \Phi_1$.

\medskip
\noindent
Let $\phi(x,y,\theta)$ be a holomorphic quadratic form on $\comp^n_x \times \comp^n_y \times \comp_{\theta}^N$, which is a non-degenerate phase function in the sense of H\"ormander, generating the graph of $\kappa$. It follows from (\ref{eq2.13}), as explained in~\cite{CGHS}, that the quadratic form
\begeq
\label{eq2.14}
\comp^n \times \comp^N \ni (y,\theta) \mapsto -{\rm Im}\, \phi(0,y,\theta)
\endeq
is non-degenerate, and since it is pluriharmonic, the signature is necessarily $(n+N,n+N)$. Recalling that
\begeq
\label{eq2.15}
\kappa(\Lambda_{\Phi_2}) = \Lambda_{\Phi},
\endeq
we see, using~\cite{CGHS}, that the quadratic form
\begeq
\label{eq2.16}
(y,\theta) \mapsto -{\rm Im}\, \phi(0,y,\theta) + \Phi_2(y)
\endeq
is non-degenerate as well. We would like to conclude that the signature of the quadratic form in (\ref{eq2.16}) is also $(n+N,n+N)$, and to that end,
we follow~\cite{Sj82} and consider the continuous deformation
\begeq
\label{eq2.17}
[0,1] \ni t \mapsto -{\rm Im}\, \phi(0,y,\theta) + t \Phi_2(y).
\endeq
Using (\ref{eq2.3.93}) we see that
\begeq
\label{eq2.18}
\frac{1}{i} \sigma (\mu, \iota_{\Phi_2}(\mu)) \geq 0,\quad \mu \in \Lambda_{t\Phi_2},\quad 0 \leq t \leq 1.
\endeq
It follows as before that the I-Lagrangian manifold $\kappa(\Lambda_{t \Phi_2})$ is transversal to the fiber, $0\leq t \leq 1$, and therefore we
conclude that the non-degeneracy of the quadratic forms in (\ref{eq2.17}) is maintained along the deformation $0\leq t \leq 1$. Recalling that the set
of non-degenerate quadratic forms of a fixed given signature is a connected component in the set of all non-degenerate quadratic forms, we conclude that the signature of the quadratic form in (\ref{eq2.16}) is $(n+N,n+N)$. Now, as explained in~\cite{CGHS}, the quadratic form $\Phi$ in (\ref{eq2.15}) is given by
\begeq
\label{eq2.19}
\Phi(x) = {\rm vc}_{y,\theta}(-{\rm Im}\, \phi(x,y,\theta) + \Phi_2(y))
\endeq
where ${\rm vc}_{y,\theta}$ stands for the critical value with respect to $y$, $\theta$, and we conclude by the fundamental lemma of~\cite{Sj82} that $\Phi$ is plurisubharmonic. (As already observed, the plurisubharmonicity of $\Phi$ is necessarily strict.)

\bigskip
\noindent
We shall next see that $\Phi \leq \Phi_1$, and when doing so it will be convenient the discuss the following auxiliary result first, which may be of some independent interest.
\begin{prop}
\label{gen_f}
Let $\kappa: \comp^{2n} \rightarrow \comp^{2n}$ be a complex linear canonical transformation which is positive relative to $(\Lambda_{\Phi_1},\Lambda_{\Phi_2})$. If $\Phi_2$ is strictly convex then $\kappa$ has a generating function $\varphi(x,\eta)$ which is a holomorphic quadratic form such that
\begeq
\label{eq2.19.1}
\kappa: (\varphi'_{\eta}(x,\eta), \eta) \mapsto (x,\varphi'_x(x,\eta)).
\endeq
\end{prop}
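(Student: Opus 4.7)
The plan is to reduce the existence of the generating function $\varphi(x,\eta)$ to a transversality statement, and then to extract that transversality from the positivity (\ref{eq1.4}) of $\kappa$ together with the strict convexity of $\Phi_2$. Writing $\kappa(y,\eta) = (Ay+B\eta,\, Cy+D\eta)$ in block form, the ansatz $\kappa(\varphi'_\eta,\eta) = (x,\varphi'_x)$ for a holomorphic quadratic $\varphi$ is equivalent to the bijectivity of the map $(y,\eta) \mapsto (x,\eta)$, i.e., to the invertibility of $A$, or equivalently to
$$
\kappa\bigl(\{(y,0):y\in \comp^n\}\bigr) \cap \{(0,\xi):\xi\in \comp^n\} = \{0\}.
$$
Once this transversality is secured, the standard symplectic construction supplies $\varphi$: parametrizing the graph of $\kappa$ by $(x,\eta)$, the holomorphic one-form $\xi\,dx + y\,d\eta$ on the graph is closed, by the Lagrangian property of the graph relative to $\pi_1^*\sigma - \pi_2^*\sigma$, and being linear it is exact, yielding a holomorphic quadratic $\varphi(x,\eta)$ with $\xi = \varphi'_x$ and $y = \varphi'_\eta$.

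To establish the transversality, consider the complex Lagrangian plane $\Lambda_0 := \{(y,0):y\in \comp^n\} = \Lambda_{\Psi}$ with $\Psi \equiv 0$. Strict convexity of $\Phi_2$, viewed as a real quadratic form on $\comp^n \cong \real^{2n}$, means that its real Hessian is positive definite; in particular $\Phi_2 > 0$ away from the origin, so $0 \leq \Phi_2$ and Theorem \ref{theo_first} yields that $\Lambda_0$ is positive relative to $\Lambda_{\Phi_2}$. Moreover, strict convexity forces the origin to be the only real critical point of $\Phi_2$, so $\partial_x \Phi_2(x) = 0$ implies $x=0$; hence $\Lambda_0 \cap \Lambda_{\Phi_2}$, which reduces precisely to this equation, collapses to $\{0\}$. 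By the Remark following (\ref{eq2.3.3}), this transversality is equivalent to non-degeneracy of the Hermitian form $(1/i)\sigma(\cdot, \iota_{\Phi_2}\cdot)$ along $\Lambda_0$; combined with the positivity just established, the form is strictly positive on $\Lambda_0 \setminus \{0\}$.

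Applying the positivity inequality (\ref{eq1.4}) to $\rho = (y,0)$ with $y \neq 0$ now gives
$$
\frac{1}{i}\sigma\bigl(\kappa(\rho), \iota_{\Phi_1}\kappa(\rho)\bigr) \;\geq\; \frac{1}{i}\sigma\bigl(\rho, \iota_{\Phi_2}(\rho)\bigr) \;>\; 0.
$$
On the other hand, as recalled in the discussion surrounding (\ref{eq2.6}), the fiber $\{(0,\xi):\xi \in \comp^n\}$ is strictly negative relative to $\Lambda_{\Phi_1}$, so $(1/i)\sigma(\mu, \iota_{\Phi_1}\mu) < 0$ for every non-zero $\mu$ in this fiber. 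The two inequalities are incompatible unless $\kappa(y,0)$ lies outside the fiber, establishing the required transversality.

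The main obstacle is locating the place where strict convexity of $\Phi_2$, and not merely strict plurisubharmonicity, is essential. Under the weaker hypothesis, the pluriharmonic part of $\Phi_2$ could introduce real critical points off the origin, so $\Lambda_0$ would only be positive but not transversal to $\Lambda_{\Phi_2}$, and the strict inequality on the right-hand side of the display above would degenerate. Strict convexity is precisely the structural assumption that enforces positive definiteness of the Hermitian form along the zero section, which is what drives the squeeze argument.
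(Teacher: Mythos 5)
Your argument is correct and is essentially the paper's own proof: both reduce the existence of $\varphi(x,\eta)$ to the injectivity of the projection $(x,\xi;y,\eta)\mapsto(x,\eta)$ from the graph of $\kappa$, derive strict positivity of $b_2=\frac{1}{i}\sigma(\cdot,\iota_{\Phi_2}\cdot)$ on the zero section from strict convexity via Theorem \ref{theo_first}, use strict negativity of $b_1$ on the fiber, and let the positivity inequality (\ref{eq1.4}) squeeze the two to force the kernel to vanish. The extra details you supply (the closed-form construction of $\varphi$ from transversality, and the non-degeneracy of $b_2$ via the transversality remark) are consistent elaborations of steps the paper leaves implicit.
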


\begin{proof}
It suffices to show that the map
$$
\pi: {\rm graph}(\kappa) \ni (x,\xi; y,\eta) \mapsto (x,\eta) \in \comp^{2n}
$$
is bijective, i.e. injective. Let $(0,\xi; y,0) \in {\rm Ker}(\pi)$ so that $\kappa: (y,0) \mapsto (0,\xi)$. Let us consider the Hermitian forms,
$$
b_j(\nu, \mu) = \frac{1}{i} \sigma (\nu, \iota_{\Phi_j}(\mu)),\quad j=1,2.
$$
The strict convexity of $\Phi_2$ together with Theorem \ref{theo_first} implies that
\begeq
\label{eq2.19.2}
b_2((y,0), (y,0)) \asymp \abs{y}^2,\quad y \in \comp^n,
\endeq
and the strict negativity of the fiber with respect to $\Lambda_{\Phi_1}$ gives,
$$
b_1((0, \xi), (0,\xi)) \asymp - \abs{\xi}^2,\quad \xi \in \comp^n.
$$
Hence by the positivity of $\kappa$, we get
$$
0\leq b_1((0, \xi), (0,\xi)) - b_2((y,0), (y,0))  \asymp - \left(\abs{\xi}^2 + \abs{y}^2\right).
$$
It follows that $(y,\xi) = 0$ and we conclude that $\pi$ is injective.
\end{proof}

\medskip
\noindent
{\it Remark}. Assume that the assumptions of Proposition \ref{gen_f} hold. The holomorphic quadratic form $\varphi(x,\theta) - y\cdot \theta$ is then a non-degenerate phase function generating the graph of $\kappa$.

\medskip
\noindent
Let us now turn to the proof of the fact that
\begeq
\label{eq2.20}
\Phi \leq \Phi_1.
\endeq
It follows from the remarks above that it suffices to verify (\ref{eq2.20}) when the pluriharmonic part of $\Phi_2$ vanishes, and since we are again allowed to perform complex linear changes of variables in $\comp^n$, as before, we conclude that it suffices to consider the case when $\Phi_2$ is given by (\ref{eq2.10}). Proposition \ref{gen_f} applies and there exists therefore a holomorphic quadratic form $\varphi(x,\theta)$ such that
\begeq
\label{eq2.21}
\kappa: (\varphi'_{\theta}(x,\theta), \theta) \mapsto (x,\varphi'_x(x,\theta)).
\endeq
We shall now express the positivity of $\kappa$ relative to $(\Lambda_{\Phi_1}, \Lambda_{\Phi_2})$ in terms of the generating function $\varphi$. To this end, we shall first obtain an explicit expression for the Hermitian form
$$
\frac{1}{i} \sigma((y,\eta), \iota_{\Phi_1}(y,\eta)),\quad (y,\eta)\in \comp^{2n},
$$
where we write
\begeq
\label{eq2.21.1}
\Phi_1(x) = \frac{1}{2} L \overline{x} \cdot x + {\rm Re}\, (Ax\cdot x), \quad L = 2 \Phi''_{1,x\overline{x}}, \quad A = \Phi''_{1,xx}.
\endeq
Here $L$ is Hermitian positive definite and performing a unitary transformation, we may assume, for simplicity, that $L$ is diagonal, with real positive diagonal elements. A simple computation using (\ref{eq2.3}) shows that
\begeq
\label{eq2.21.2}
\frac{1}{i} \sigma((y,\eta), \iota_{\Phi_1}(y,\eta)) = L \overline{y} \cdot y + (2Ay - i\eta)\cdot x,
\endeq
where
$$
L \overline{x} = i \eta - 2Ay,
$$
and therefore we get
\begeq
\label{eq2.21.3}
\frac{1}{i} \sigma((y,\eta), \iota_{\Phi_1}(y,\eta)) = L \overline{y} \cdot y - L^{-1} (2iAy + \eta) \cdot \overline{(2iAy + \eta)}.
\endeq
Using also (\ref{eq2.21}), we conclude that $\kappa$ is positive relative to $(\Lambda_{\Phi_1}, \Lambda_{\Phi_2})$ precisely when
\begeq
\label{eq2.23}
L^{-1} (\varphi'_x + 2iAx)\cdot \overline{(\varphi'_x + 2iAx)} + \abs{\varphi'_{\theta}(x,\theta)}^2 \leq L \overline{x}\cdot x + \abs{\theta}^2, \quad (x,\theta) \in \comp^{2n}.
\endeq
It is now easy to conclude the proof of the necessity part of Theorem \ref{theo_main}, using (\ref{eq2.23}). It follows from (\ref{eq2.19}) that we can write
\begeq
\label{eq2.24}
\Phi(x) = {\rm vc}_{y,\theta} \left(-{\rm Im}\, (\varphi(x,\theta) - y\cdot \theta) + \Phi_2(y)\right).
\endeq
At the unique critical point $(y(x), \theta(x))$, we have
\begeq
\label{eq2.25}
y = \varphi'_{\theta}(x,\theta),
\endeq
\begeq
\label{eq2.26}
\frac{2}{i} \frac{\partial \Phi_2}{\partial y}(y) = \theta \Longleftrightarrow \theta = \frac{1}{i} \overline{y}.
\endeq
Injecting (\ref{eq2.26}) into (\ref{eq2.24}), we get
\begeq
\Phi(x) = -{\rm Im}\, \varphi(x,\theta) - \frac{\abs{\theta}^2}{2}, \quad \theta = \theta(x),
\endeq
and in view of (\ref{eq2.21.1}), it suffices therefore to establish the inequality
\begeq
\label{eq2.27}
-2 {\rm Im}\, \varphi(x,\theta) \leq L \overline{x}\cdot x + \abs{\theta}^2 + 2 {\rm Re}(Ax \cdot x), \quad (x,\theta)\in \comp^{2n}.
\endeq
When verifying (\ref{eq2.27}), we write, using the Euler homogeneity relation,
\begeq
\label{eq2.28}
2 \varphi(x,\theta) = \varphi'_x(x,\theta)\cdot x + \varphi'_{\theta}(x,\theta)\cdot \theta,
\endeq
and therefore,
\begeq
\label{eq2.28.1}
-2 {\rm Im}\, \varphi(x,\theta) = - {\rm Im}\left((\varphi'_x(x,\theta) + 2iAx)\cdot x + \varphi'_{\theta}(x,\theta)\cdot \theta\right) + 2 {\rm Re}(Ax \cdot x).
\endeq
An application of the Cauchy-Schwarz inequality with respect to the positive definite Hermitian forms $(x,y) \mapsto L^{-1}x\cdot \overline{y}$, $(x,y) \mapsto x\cdot \overline{y}$ together with the inequality $ab \leq a^2/2 + b^2/2$ allows us to conclude that the first term in the right hand side of (\ref{eq2.28.1}) does not exceed
$$
\frac{1}{2} \left(L^{-1} (\varphi'_x + 2iAx)\cdot \overline{(\varphi'_x + 2iAx)} + L\overline{x}\cdot x + \abs{\varphi'_{\theta}(x,\theta)}^2 + \abs{\theta}^2\right).
$$
The inequality (\ref{eq2.27}) follows, in view of (\ref{eq2.23}). The proof of the necessity part of Theorem \ref{theo_main} is complete.

\bigskip
\noindent
{\it Remark.} In the context of Theorem \ref{theo_main}, assume that $\Phi_1 = \Phi_2 = : \Phi_0$ and let us write
\begeq
\label{eq2.29}
\Phi_0(x) = {\rm sup}_{y\in {\bf R}^n} \left(-{\rm Im}\, \varphi(x,y)\right),
\endeq
where $\varphi(x,y)$ is a holomorphic quadratic form on $\comp^n_x \times \comp^n_y$, such that ${\rm det}\, \varphi''_{xy} \neq 0$ and
${\rm Im}\, \varphi''_{yy} > 0$. In the special case when $\Phi_0$ is given by (\ref{eq2.10}), we can take
$$
\varphi(x,y) = i\left(\frac{x^2}{2} + \sqrt{2} x \cdot y + \frac{y^2}{2}\right).
$$
The complex canonical transformation
\begeq
\label{eq2.30}
\kappa_{\varphi}: \comp^{2n} \ni (y,-\varphi'_y(x,y)) \mapsto (x,\varphi'_x(x,y)) \in \comp^{2n}
\endeq
maps $\real^{2n}$ bijectively onto $\Lambda_{\Phi_0}$, see~\cite{HiSj15}, and it exchanges the complex conjugation map ${\cal C}$ and the involution $\iota_{\Phi_0}$. Setting
\begeq
\label{eq2.31}
\widetilde{\kappa} = \kappa_{\varphi}^{-1} \circ \kappa \circ \kappa_{\varphi},
\endeq
we see that the complex linear canonical transformation $\widetilde{\kappa}$ is positive in the sense of~\cite{H95},
\begeq
\label{eq2.32}
\frac{1}{i}\biggl(\sigma(\widetilde{\kappa}(\rho), {\cal C} \widetilde{\kappa}(\rho)) - \sigma(\rho, {\cal C}(\rho))\biggr) \geq 0, \quad \rho \in \comp^{2n}.
\endeq
An application of Proposition 5.10 of~\cite{H95} allows us to conclude therefore that the map $\widetilde{\kappa}$ enjoys the following factorization,
\begeq
\label{eq2.33}
\widetilde{\kappa} = \widetilde{\kappa_1} \circ \widetilde{\kappa_2} \circ \widetilde{\kappa_3},
\endeq
where $\widetilde{\kappa_1}$ and $\widetilde{\kappa_3}$ are real linear canonical maps and the map $\widetilde{\kappa_2}$ is of the form
\begeq
\label{eq2.34}
\widetilde{\kappa_2} = \exp(-i H_{\widetilde{q}})
\endeq
where $\widetilde{q}$ is a quadratic form with ${\rm Re}\, \widetilde{q} \geq 0$ on $\real^{2n}$ --- see also the discussion in the proof of Proposition 5.12 of~\cite{H95}. We obtain the factorization
\begeq
\label{eq2.35}
\kappa = \kappa_1 \circ \kappa_2 \circ \kappa_3,
\endeq
where we have
\begeq
\label{eq2.36}
\kappa_j: \Lambda_{\Phi_0} \rightarrow \Lambda_{\Phi_0},\quad j =1,3,
\endeq
and
\begeq
\label{eq37}
\kappa_2 = \exp(-i H_q),
\endeq
where $q$ is a holomorphic quadratic form on $\comp^{2n}$ such that ${\rm Re}\, q \geq 0$ along $\Lambda_{\Phi_0}$. The representation (\ref{eq2.35}) can be used to give an alternative proof of the basic inequality $\Phi \leq \Phi_0$ in Theorem \ref{theo_main}, in this special case.

\section{Positivity and Fourier integral operators}
\label{sect_three}
\setcounter{equation}{0}

The purpose of this section is to establish the sufficiency part of Theorem \ref{theo_main}. To this end, let $\Phi_1$, $\Phi_2$ be two strictly plurisubharmonic quadratic forms on $\comp^n$ and let $\kappa: \comp^{2n} \rightarrow \comp^{2n}$ be a complex linear canonical transformation. Assume that \begeq
\label{eq3.1}
\kappa(\Lambda_{\Phi_2}) = \Lambda_{\Phi},
\endeq
where $\Phi$ is a strictly plurisubharmonic quadratic form such that
\begeq
\label{eq3.1.1}
\Phi \leq \Phi_1.
\endeq
We shall establish the positivity of $\kappa$ relative to $(\Lambda_{\Phi_1}, \Lambda_{\Phi_2})$ by making a judicious choice of a non-degenerate phase function generating the graph of $\kappa$, and to this end, it will be convenient to consider a metaplectic  Fourier integral operator associated to $\kappa$. Let therefore $\varphi(x,y,\theta)$ be a holomorphic quadratic form on $\comp^n_x \times \comp^n_y \times \comp^N_{\theta}$, which is a non-degenerate phase function in the sense of H\"ormander, generating the graph of $\kappa$. It follows from~\cite{CGHS} that the plurisubharmonic quadratic form
\begeq
\label{eq3.2}
\comp^n \times \comp^N \ni (y,\theta) \mapsto -{\rm Im}\, \varphi(0,y,\theta) + \Phi_2(y)
\endeq
is non-degenerate of signature $(n+N,n+N)$. We conclude, following~\cite{Sj82},~\cite{CGHS} that the Fourier integral operator
\begeq
\label{eq3.3}
Au(x) = \int\!\!\!\int e^{i\varphi(x,y,\theta)} a u(y)\, dy\, d\theta, \quad a \in \comp,
\endeq
quantizing $\kappa$, can be realized by means of a good contour and we obtain a bounded linear map,
\begeq
\label{eq3.4}
A: H_{\Phi_2}(\comp^n) \rightarrow H_{\Phi}(\comp^n).
\endeq
Here
$$
H_{\Phi_2}(\comp^n) = {\rm Hol}(\comp^n) \cap L^2(\comp^n, e^{-2\Phi_2} L(dx)),
$$
with $H_{\Phi}(\comp^n)$ having an analogous definition.

\medskip
\noindent
We shall now discuss a Bergman type representation of the bounded operator in (\ref{eq3.4}), see also~\cite{MeSj2} for a related discussion. To this end, let us recall from Theorem \ref{theo_kernel} that we can write
\begeq
\label{eq3.5}
Au(x) = \int K_A(x,\overline{y}) u(y)\, e^{-2\Phi_2(y)}\, L(dy) =:\widetilde{A}u(x).
\endeq
Here the kernel $K_A(x,z)$ is holomorphic on $\comp^n_x \times \comp^n_z$, with
$$
y \mapsto \overline{K(x,\overline{y})} \in H_{\Phi_2}(\comp^n),
$$
uniquely determined by (\ref{eq3.5}). If $u\in L^2_{\Phi_2}(\comp^n) = L^2 (\comp^n, e^{-2\Phi_2}L(dx))$ is orthogonal $H_{\Phi_2}(\comp^n)$, we see from (\ref{eq3.5}) that $\widetilde{A} u = 0$. Hence the operator $\widetilde{A}$ in (\ref{eq3.5}) is a well defined linear continuous map
$$
\widetilde{A}: L^2_{\Phi_2}(\comp^n) \rightarrow H_{\Phi_2}(\comp^n).
$$
Furthermore, $\widetilde{A}$ extends to a map: ${\cal E}'(\comp^n) \rightarrow {\rm Hol}(\comp^n)$ and we have
\begeq
\label{eq3.51}
K_A(x,\overline{y}) e^{-2\Phi_2(y)} = \left(\widetilde{A}\delta_y\right)(x),
\endeq
where $\delta_y \in {\cal E}'(\comp^n)$ is the delta function at $y$. Let next $\Pi_2: L^2_{\Phi_2}(\comp^n) \rightarrow H_{\Phi_2}(\comp)$ be the orthogonal projection and let us recall from~\cite{HiSj15} that the operator $\Pi_2$ is given by
\begeq
\label{eq3.52}
\Pi_2 u(x) = a_2 \int e^{2\Psi_2(x,\overline{y}) - \Phi_2(y)} u(y)\, L(dy),\quad a_2 > 0.
\endeq
Here $\Psi_2$ is the polarization of $\Phi_2$, i.e. a holomorphic quadratic form on $\comp^{2n}_{x,y}$ such that $\Psi_2(x,\overline{x}) = \Phi_2(x)$.
We get $\widetilde{A} \delta_y = \widetilde{A} \Pi_2 \delta_y = A \Pi_2 \delta_y$, and it follows from (\ref{eq3.51}) that
\begeq
\label{eq3.6}
K_A(x,\overline{y}) = A(a_2 e^{2\Psi_2(\cdot, \overline{y})})(x).
\endeq
From~\cite{HiSj15}, let us recall the basic property,
$$
2{\rm Re}\, \Psi_2(x,\overline{y}) - \Phi_2(x) - \Phi_2(y) \sim -\abs{x-y}^2,
$$
on $\comp^n_x \times \comp^n_y$, and in particular we have
\begeq
\label{eq3.7}
2 {\rm Re}\, \Psi_2(x,\overline{y}) \leq \Phi_2(x) + \Phi_2(y).
\endeq
It follows that
\begeq
\label{eq3.8}
-{\rm Im}\, \varphi(0,\widetilde{y},\theta) + 2 {\rm Re}\,\Psi_2(\widetilde{y},0) \leq -{\rm Im}\, \varphi(0,\widetilde{y},\theta) + \Phi_2(\widetilde{y}).
\endeq
Here, as observed in (\ref{eq3.2}), the right hand side is a non-degenerate plurisubharmonic quadratic form of signature $(n+N,n+N)$, and since the left hand side is pluriharmonic, we conclude that it is also non-degenerate of signature $(n+N,n+N)$. Writing
$$
-{\rm Im}\, \varphi(0,\widetilde{y},\theta) + 2 {\rm Re}\,\Psi_2(\widetilde{y},0) = {\rm Re}\,\left(i\varphi(0,\widetilde{y},\theta) + 2 \Psi_2(\widetilde{y},0)\right),
$$
we conclude that the holomorphic quadratic form
$$
\comp^n \times \comp^N \ni (\widetilde{y},\theta) \mapsto i\varphi(0,\widetilde{y},\theta) + 2 \Psi_2(\widetilde{y},0)
$$
is non-degenerate. It follows that the holomorphic function
$$
\comp^n \times \comp^N \ni (\widetilde{y},\theta) \mapsto i\varphi(x,\widetilde{y},\theta) + 2 \Psi_2(\widetilde{y},z)
$$
has a unique critical point which is non-degenerate, for each $(x,z)\in \comp^n \times \comp^n$. An application of exact (quadratic) stationary phase allows us therefore to conclude that
\begeq
\label{eq3.9}
K_A(x,\overline{y}) = \widehat{a} e^{2\Psi(x,\overline{y})},\quad \widehat{a}\in \comp.
\endeq
Here $\Psi(x,z)$ is a holomorphic quadratic form on $\comp^{2n}$ given by
\begeq
\label{eq3.10}
2 \Psi(x,z) = {\rm vc}_{\widetilde{y},\theta} \left(i\varphi(x,\widetilde{y},\theta) + 2 \Psi_2(\widetilde{y},z)\right).
\endeq

\bigskip
\noindent
Let us now make the following basic observation.
\begin{prop}
\label{prop1}
The holomorphic quadratic form $\Psi(x,z)$ given in {\rm (\ref{eq3.10})} satisfies
\begeq
\label{eq3.11}
2{\rm Re}\, \Psi(x,\overline{y}) \leq \Phi(x) + \Phi_2(y),\quad (x,y)\in \comp^n_x \times \comp^n_y.
\endeq
\end{prop}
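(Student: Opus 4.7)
The plan is to derive the pointwise inequality (\ref{eq3.11}) from the $L^2$-boundedness of the Fourier integral operator $A: H_{\Phi_2}(\comp^n) \to H_{\Phi}(\comp^n)$ established in (\ref{eq3.4}), by testing $A$ against the reproducing kernel of $H_{\Phi_2}(\comp^n)$ at an arbitrary point.

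First, I would recall from (\ref{eq3.52}) that the reproducing kernel of $H_{\Phi_2}(\comp^n)$ at a point $y_0 \in \comp^n$ is
$$
k_{y_0}(x) = a_2 e^{2\Psi_2(x, \overline{y_0})} \in H_{\Phi_2}(\comp^n),
$$
with squared norm $\norm{k_{y_0}}_{H_{\Phi_2}}^2 = k_{y_0}(y_0) = a_2 e^{2\Phi_2(y_0)}$, since $\Psi_2(y_0, \overline{y_0}) = \Phi_2(y_0)$. Combining (\ref{eq3.6}) with (\ref{eq3.9}) yields the key identity
$$
(A k_{y_0})(x) = K_A(x, \overline{y_0}) = \widehat{a}\, e^{2\Psi(x, \overline{y_0})}.
$$
The standard pointwise bound $\abs{f(x)} \leq C_{\Phi}\, e^{\Phi(x)} \norm{f}_{H_{\Phi}}$ valid for any $f \in H_{\Phi}(\comp^n)$, applied with $f = A k_{y_0}$ and combined with the operator estimate $\norm{A k_{y_0}}_{H_{\Phi}} \leq \norm{A}\, \norm{k_{y_0}}_{H_{\Phi_2}}$, then delivers
$$
\abs{\widehat{a}}\, e^{2{\rm Re}\,\Psi(x, \overline{y_0})} \leq C\, e^{\Phi(x) + \Phi_2(y_0)}, \quad (x, y_0) \in \comp^{2n},
$$
for some constant $C > 0$ independent of $(x, y_0)$.

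To complete the argument, one fixes the scalar $a \in \comp$ in (\ref{eq3.3}) so that $\widehat{a} \neq 0$, for instance by taking $a = 1$: exact quadratic stationary phase applied to the critical value formula (\ref{eq3.10}), viewed through (\ref{eq3.6}), shows that $\widehat{a}$ is a non-vanishing multiple of $a$ in virtue of the non-degeneracy of the quadratic form (\ref{eq3.2}). Taking logarithms in the previous estimate produces
$$
2{\rm Re}\,\Psi(x, \overline{y_0}) - \Phi(x) - \Phi_2(y_0) \leq C',
$$
with $C'$ a constant independent of $(x, y_0)$. The left-hand side is a real quadratic form on $\comp^{2n}$, homogeneous of degree two in $(x, y_0)$, since $\Psi$, $\Phi$, $\Phi_2$ are purely quadratic forms. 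The substitution $(x, y_0) \mapsto (\lambda x, \lambda y_0)$ with $\lambda > 0$, followed by letting $\lambda \to \infty$, then forces the left-hand side to be non-positive, which is precisely (\ref{eq3.11}). The main point requiring attention is this final homogeneity step, which crucially uses the purely quadratic (rather than more general polynomial) nature of the forms at hand.
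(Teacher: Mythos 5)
Your argument is correct, and it is genuinely different from the paper's. The paper proves (\ref{eq3.11}) geometrically: after passing to $\Phi_2^*(y)=\Phi_2(\overline y)$ via the antilinear map $\Gamma:(y,\eta)\mapsto(\overline y,-\overline\eta)$, it shows that the projection $\pi_{x,y}\bigl({\rm Graph}(\kappa\circ\Gamma)\cap(\Lambda_{\Phi}\times\Lambda_{\Phi_2^*})\bigr)$ is maximally totally real, computes from the critical point equations of (\ref{eq3.10}) that the gradient of $F(x,y)=\Phi(x)+\Phi_2^*(y)-2{\rm Re}\,\Psi(x,y)$ vanishes there, and concludes that the strictly plurisubharmonic quadratic form $F$, vanishing to second order on a maximally totally real subspace, must be $\geq 0$. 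You instead exploit the operator theory already in place: the boundedness (\ref{eq3.4}) of $A:H_{\Phi_2}\to H_{\Phi}$, the coherent-state identity $Ak_{y_0}=\widehat a\,e^{2\Psi(\cdot,\overline{y_0})}$ obtained by combining (\ref{eq3.6}) with (\ref{eq3.9}), the standard pointwise bound $\abs{f(x)}\leq Ce^{\Phi(x)}\norm{f}_{H_\Phi}$, the non-vanishing of $\widehat a$ (correctly justified by exact quadratic stationary phase over a good contour), and finally homogeneity of degree two to upgrade "bounded above" to "nonpositive". All the ingredients you invoke are established in the paper before Proposition \ref{prop1}, so there is no circularity, and the scaling step is sound since $\Psi$, $\Phi$, $\Phi_2$ are purely quadratic and $\overline{\lambda y}=\lambda\overline y$ for $\lambda>0$. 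The trade-off: your proof is shorter and more robust (it only needs $L^2\to L^2$ boundedness plus reproducing kernels), but it yields only the inequality $F\geq 0$, whereas the paper's geometric proof additionally identifies the zero set of $F$ and gives the quantitative strengthening $F\asymp{\rm dist}(\cdot,\pi_{x,y}({\rm Graph}(\kappa\circ\Gamma)\cap\Lambda_\Phi\times\Lambda_{\Phi_2^*}))^2$ recorded in the Remark following the proposition; your approach also needs the amplitude to be nonzero, while the paper's argument concerns only the phase geometry.
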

\begin{proof}
It will be more convenient to verify that
\begeq
\label{eq3.11.1}
2{\rm Re}\, \Psi(x,y) \leq \Phi(x) + \Phi^*_2(y),\quad (x,y)\in \comp^n_x \times \comp^n_y,
\endeq
where $\Phi_2^*(y) = \Phi_2(\overline{y})$. A direct calculation shows that
$$
\frac{2}{i}\partial _y\Phi _2^*(y)=-\overline{\frac{2}{i}(\partial_y\Phi _2)(\overline{y}), }
$$
or equivalently,
$$
\frac{2}{i}\partial _y(\Phi _2^*)(\overline{y})=-\overline{\frac{2}{i}(\partial _y\Phi _2)(y). }
$$
It follows that the antilinear involution
\begeq
\label{eq3.11.2}
\Gamma: \comp^{2n} \ni (y,\eta) \mapsto (\overline{y},-\overline{\eta}) \in \comp^{2n}
\endeq
maps $\Lambda_{\Phi_2}$ bijectively onto $\Lambda_{\Phi_2^*}$. We conclude in view of (\ref{eq3.1}) that
\begeq
\label{eq3.11.3}
\kappa\circ \Gamma: \Lambda_{\Phi_2^*} \rightarrow \Lambda_{\Phi},
\endeq
and let us consider the graph of the map in (\ref{eq3.11.3}), ${\rm Graph}(\kappa \circ \Gamma) \cap \left(\Lambda_{\Phi} \times \Lambda_{\Phi_2^*}\right)$. Here $\Lambda_{\Phi} \times \Lambda_{\Phi_2^*} = \Lambda_{\Phi(x) + \Phi_2^*(y)}$ is I-Lagrangian and R-symplectic for the standard symplectic form
\begeq
\label{eq3.11.4}
d\xi \wedge dx + d\eta \wedge dy,
\endeq
on $\comp^{2n}_{x,\xi} \times \comp^{2n}_{y,\eta}$ and we claim that ${\rm Graph}(\kappa \circ \Gamma) \cap \left(\Lambda_{\Phi} \times \Lambda_{\Phi_2^*}\right)$ is Lagrangian for the symplectic form in (\ref{eq3.11.4}), restricted to $\Lambda_{\Phi} \times \Lambda_{\Phi_2^*}$. This can be seen by a direct computation: when $(t,s)\in \Lambda_{\Phi_2^*} \times \Lambda_{\Phi_2^*}$ we have, writing $\sigma$ for the standard symplectic form on $\comp^{2n}$,
$$
\sigma(\kappa(\Gamma(t)), \kappa(\Gamma(s))) + \sigma(t,s) = \sigma(\Gamma(t), \Gamma(s)) + \sigma(t,s) = - \overline{\sigma(t,s)} + \sigma(t,s) = 0,
$$
since $\sigma(t,s)$ is real. Here we have also used that, by a straightforward computation,
\begeq
\label{eq3.11.5}
\sigma (\Gamma t,\Gamma s) = -\overline{\sigma (t,s)}.
\endeq
It is then well known that $\pi_{x,y}\left({\rm Graph}(\kappa \circ \Gamma)\cap \left(\Lambda_{\Phi} \times \Lambda_{\Phi_2^*}\right)\right)$, the projection of ${\rm Graph}(\kappa \circ \Gamma)\cap (\Lambda_{\Phi} \times \Lambda_{\Phi_2^*})$ in $\comp^{2n}_{x,y}$, is maximally totally real, see~\cite{MeSj2}.

\medskip
\noindent
We now come to check (\ref{eq3.11.1}). To this end, we observe that (\ref{eq3.10}) gives
\begeq
\label{eq3.12}
2 \partial_x \Psi(x,y) = i \partial_x \varphi(x,\widetilde{y},\theta)
\endeq
and
\begeq
\label{eq3.13}
2 \partial_y \Psi (x,y) = 2 \partial_y \Psi_2 (\widetilde{y},y),
\endeq
where
\begeq
\label{eq3.14}
\partial_{\theta} \varphi(x,\widetilde{y},\theta) = 0, \quad \partial_{\widetilde{y}} \varphi(x,\widetilde{y},\theta) + \frac{2}{i} \partial_{\widetilde{y}}\Psi_2(\widetilde{y},y) = 0.
\endeq
We shall consider (\ref{eq3.12}), (\ref{eq3.13}) at the points $(x,y) \in \pi_{x,y}\left({\rm Graph}(\kappa \circ \Gamma)\cap \Lambda_{\Phi}\times \Lambda_{\Phi_2^*}\right)$, which corresponds to $\widetilde{y} = \overline{y}$ in (\ref{eq3.14}). Using (\ref{eq3.14}) together with the fact that
$$
\partial_{\widetilde{y}}\Psi_2(\widetilde{y}, \overline{\widetilde{y}}) = \partial_{\widetilde{y}} \Phi_2(\widetilde{y}),
$$
and (\ref{eq3.13}) together with the fact that
$$
\left(\partial_y \Psi_2\right) (\overline{y},y) = \partial_y \Phi_2^*(y),
$$
we conclude that at the points
$$
(x,y)\in \pi_{x,y}\left({\rm Graph}(\kappa \circ \Gamma)\cap \Lambda_{\Phi}\times \Lambda_{\Phi_2^*}\right),
$$
the following equalities hold,
\begeq
\label{eq3.15}
\partial_x \Psi(x,y) = \partial_x \Phi(x),\quad \partial_y \Psi (x,y) = \partial_y \Phi_2^*(y).
\endeq
In other words,
$$
\partial_x \left(\Phi(x) - 2{\rm Re}\, \Psi(x,y)\right) = \partial_y \left(\Phi_2^*(y) - 2{\rm Re}\, \Psi(x,y)\right) =0,
$$
along $\pi_{x,y}\left({\rm Graph}(\kappa \circ \Gamma)\cap \Lambda_{\Phi}\times \Lambda_{\Phi_2^*}\right)$, and thus the gradient of the real valued function
\begeq
\label{eq3.16}
F(x,y) = \Phi(x) + \Phi_2^*(y) - 2{\rm Re}\, \Psi(x,y)
\endeq
vanishes on $\pi_{x,y}\left({\rm Graph}(\kappa \circ \Gamma)\cap \Lambda_{\Phi}\times \Lambda_{\Phi_2^*}\right)$. It follows that the strictly plurisubharmonic quadratic form $F(x,y)$ vanishes to the second order along
\begeq
\label{eq3.16.1}
\pi_{x,y}\left({\rm Graph}(\kappa \circ \Gamma)\cap \Lambda_{\Phi}\times \Lambda_{\Phi_2^*}\right),
\endeq
and since the latter is maximally totally real, we get $F\geq 0$, thus implying (\ref{eq3.11.1}).
\end{proof}

\medskip
\noindent
{\it Remark}. The strictly plurisubharmonic quadratic form $F(x,y)$ in (\ref{eq3.16}) vanishes to the second order along the maximally totally real subspace (\ref{eq3.16.1}), and therefore the conclusion that $F\geq 0$ can be strengthened to the following,
$$
F(x,y) \asymp {\rm dist}\left((x,y), \pi_{x,y}\left({\rm Graph}(\kappa \circ \Gamma)\cap \Lambda_{\Phi}\times \Lambda_{\Phi_2^*}\right)\right)^2.
$$

\bigskip
\noindent
Let us now return to the Bergman type representation of the Fourier integral operator $A$ in (\ref{eq3.3}) quantizing $\kappa$. Combining (\ref{eq3.5}) and (\ref{eq3.9}), we get
\begin{equation}
\label{eq3.17}
Au(x)=\iint \check{a} e^{2(\Psi (x,\overline{y})-\Phi _2(y))}u(y) dyd\overline{y},
\end{equation}
for some $\check{a}\in {\bf C}$. This can be viewed as a Fourier integral operator
\begin{equation}
\label{eq3.18}
Au(x) = \iint \check{a} e^{2(\Psi (x,\theta )-\Psi _2(y,\theta ))}u(y) dy\,d\theta,
\end{equation}
where we take the integration contour $\theta =\overline{y}$ in (\ref{eq3.18}).

\medskip
\noindent
Since $\partial _y\partial _\theta \Psi _2(y,\theta )$ is non-degenerate, the phase function
\begin{equation}
\label{eq3.19}
\phi (x,y,\theta )=\frac{2}{i}(\Psi (x,\theta )-\Psi _2(y,\theta ))
\end{equation}
is non-degenerate in the sense of H\"ormander, and the canonical transformation $\kappa$ takes the form
\begin{equation}
\label{eq3.20}
\kappa:\ \left(y , \frac{2}{i}\partial _y\Psi _2(y,\theta ) \right)
\mapsto \left(x,\frac{2}{i}\partial _x\Psi (x,\theta ) \right),\hbox{ with
}\partial _\theta \Psi (x,\theta )=\partial _\theta \Psi _2(y,\theta ).
\end{equation}
We may also notice here that if we define
$$
\kappa _\Psi :\ (\theta , -(2/i)\partial _\theta \Psi (y,\theta))\mapsto (y, (2/i)\partial _y\Psi (y,\theta ))
$$
and $\kappa _{\Psi _2}$ similarly, then $\kappa = \kappa _\Psi \circ \kappa _{\Psi _2}^{-1}$.

\medskip
\noindent
The discussion so far shows that the canonical transformation $\kappa$ enjoying the mapping properties (\ref{eq3.1}), (\ref{eq3.1.1}), admits a non-degenerate phase function of the form (\ref{eq3.19}), where the quadratic form $\Psi$ satisfies
\begeq
\label{eq3.21}
2{\rm Re}\, \Psi(x,\overline{y}) \leq \Phi_1(x) + \Phi_2(y),\quad (x,y)\in \comp^n_x \times \comp^n_y.
\endeq
The positivity of $\kappa$ relative to $(\Lambda_{\Phi_1}, \Lambda_{\Phi_2})$ is then implied by the following general result.

\medskip
\noindent
\begin{prop}
\label{prop2}
Let $\kappa$ be a canonical transformation satisfying {\rm (\ref{eq3.1})} and let us consider a metaplectic Fourier integral operator of the form {\rm (\ref{eq3.17})}, or equivalently {\rm (\ref{eq3.18})}, associated to $\kappa$. Then the following conditions are equivalent:
\begin{itemize}
\item[(i)] $\kappa$ is positive relative to $(\Lambda _{\Phi _1},\Lambda _{\Phi _2})$ in the sense of {\rm (\ref{eq1.4})},
\begin{equation}
\label{eq3.22}
\frac{1}{i}\sigma (t_1,\iota _{\Phi _1}t_1)-\frac{1}{i}\sigma
(t_2,\iota _{\Phi _2}t_2)\ge 0,\hbox{ whenever }t_1 = \kappa(t_2),\quad t_2 \in \comp^{2n}.
\end{equation}
\item[(ii)] $\Lambda _{2{\rm Re}\, \Psi (x,\overline{y})}$ is positive relative to $\Lambda _{\Phi _1(x)+\Phi _2(y)}$,
\item[(iii)] $2\Re \Psi (x,\overline{y})-\Phi _1(x)-\Phi _2(y)\le 0$
  on ${\bf C}_x^n\times {\bf C}_y^n$.
\end{itemize}
\end{prop}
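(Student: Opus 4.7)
\bigskip
\noindent
\textbf{Proof plan.} My plan is to establish the three equivalences via the chain (ii) $\Longleftrightarrow$ (iii) $\Longleftrightarrow$ (i), with Theorem \ref{theo_first} doing the work for the first equivalence and a direct unwinding exploiting the product structure of the weight $\Phi_1(x)+\Phi_2(y)$ doing the work for the second.

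For (ii) $\Longleftrightarrow$ (iii), I would apply Theorem \ref{theo_first} in the ambient symplectic space $\comp^{4n}=\comp^{2n}_{x,\xi}\times\comp^{2n}_{y,\eta}$ equipped with the strictly plurisubharmonic quadratic weight $\Phi_1(x)+\Phi_2(y)$ on the base $\comp^{2n}_{x,y}$. The key preliminary observation is that $\Lambda_{2\Re\Psi(x,\overline{y})}$ is in fact a $\comp$-Lagrangian plane of $\comp^{4n}$: using the antilinear involution $\Gamma:(y,\eta)\mapsto(\overline{y},-\overline{\eta})$ of (\ref{eq3.11.2}), one checks that it is identified with the graph of the complex canonical transformation $\kappa\circ\Gamma:\Lambda_{\Phi_2^*}\to\Lambda_\Phi$ appearing in (\ref{eq3.11.3}). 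Theorem \ref{theo_first} then says that this $\comp$-Lagrangian is positive relative to $\Lambda_{\Phi_1(x)+\Phi_2(y)}$ precisely when its generating function $2\Re\Psi(x,\overline{y})$ satisfies the pointwise bound $2\Re\Psi(x,\overline{y})\leq \Phi_1(x)+\Phi_2(y)$, which is exactly (iii).

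For (i) $\Longleftrightarrow$ (ii), I would use the product structure of the weight: since the Hessian appearing in (\ref{eq2.3}) is block-diagonal in $(x,y)$ when evaluated for $\Phi_1(x)+\Phi_2(y)$, the antilinear involution decomposes as $\iota_{\Phi_1(x)+\Phi_2(y)}=\iota_{\Phi_1}\times\iota_{\Phi_2}$. Under the identification from the previous step of a point $\mu\in\Lambda_{2\Re\Psi(x,\overline{y})}$ with a pair $(t_1,t_2)$ satisfying $t_1=\kappa(t_2)$, the Hermitian form (\ref{eq2.3.2}) on $\comp^{4n}$ splits correspondingly; the sign flip introduced by the twist $\Gamma$, encoded by the identity (\ref{eq3.11.5}), converts the naive sum into the difference $\frac{1}{i}\sigma(t_1,\iota_{\Phi_1}t_1)-\frac{1}{i}\sigma(t_2,\iota_{\Phi_2}t_2)$ displayed in (\ref{eq3.22}). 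Thus (ii) and (i) express one and the same inequality.

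The principal technical obstacle is the careful bookkeeping of sign and conjugation conventions. The graph of a complex canonical transformation $\kappa$ is naturally Lagrangian for the symplectic form $\sigma_1-\sigma_2$, whereas the product $\comp^{4n}$ comes equipped with $\sigma_1+\sigma_2$; the complex conjugation $\overline{y}$ in $\Psi(x,\overline{y})$ precisely encodes the twist $\Gamma$ that reconciles these two conventions and lines up the positivity inequalities. Once this geometric identification is verified explicitly from the mapping formulas (\ref{eq3.20}) for $\kappa$ together with the transformation rule (\ref{eq3.11.5}) for $\sigma$ under $\Gamma$, both equivalences follow algebraically.
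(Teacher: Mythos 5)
Your proposal is correct and follows essentially the same route as the paper: (ii)$\Leftrightarrow$(iii) by applying Theorem \ref{theo_first} to the weight $\Phi_1(x)+\Phi_2(y)$ on $\comp^{2n}_{x,y}$, and (i)$\Leftrightarrow$(ii) by unwinding the positivity inequality through the twist $\Gamma$ of (\ref{eq3.11.2}), the relation $\iota_{\Phi_2^*}=\Gamma\iota_{\Phi_2}\Gamma$, the sign rule (\ref{eq3.11.5}), and the mapping formula (\ref{eq3.20}) — exactly the ingredients the paper uses. The only point to tighten in a full write-up is that it is $\Lambda_{2\Re\Psi(x,y)}$ relative to $\Lambda_{\Phi_1(x)+\Phi_2^*(y)}$ (i.e., after substituting $y\mapsto\overline{y}$) that is genuinely $\comp$-Lagrangian and to which Theorem \ref{theo_first} applies, the passage back to (\ref{eq3.22}) then requiring the substitution $t_3=\iota_{\Phi_2}\Gamma t_2$ rather than a direct identification of $(t_1,t_2)\in\Lambda_{2\Re\Psi}$ with a pair satisfying $t_1=\kappa(t_2)$.
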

\begin{proof}
The equivalence (ii)$\Leftrightarrow$(iii) follows from Theorem \ref{theo_first}, so it suffices to show the equivalence (i)$\Leftrightarrow$(ii).

\medskip
\noindent
Clearly, (iii) is equivalent to
\begin{equation}
\label{eq3.23}
2\Re \Psi (x,y)-\Phi _1(x)-\Phi _2^*(y)\le 0 \hbox{ on }{\bf C}^{2n}_{x,y},
\end{equation}
where $\Phi _2^*(y)=\Phi _2(\overline{y})(=\overline{\Phi_2(\overline{y})})$, and by Theorem \ref{theo_first} (ii) is equivalent to
\begin{equation}
\label{eq3.24}
\Lambda _{2 {\rm Re}\, \Psi (x,y)} \hbox{ is positive relative to }\Lambda _{\Phi_1(x)+\Phi _2^*(y)}.
\end{equation}
We have,
\begin{equation}
\begin{split}
\Lambda _{2 {\rm Re}\, \Psi }&=\{ (x,(2/i) \partial _x 2\Re \Psi (x,y);
y,(2/i) \partial _y 2\Re \Psi (x,y) \}\\
&=\{ (x,(2/i) \partial _x \Psi (x,y);
y,(2/i) \partial _y \Psi (x,y) \},
\end{split}
\end{equation}
and (\ref{eq3.24}) means that
\begin{equation}
\label{eq3.25}
\frac{1}{i}\sigma (t_1,\iota _{\Phi _1}t_1)+\frac{1}{i}\sigma (t_2,\iota _{\Phi _2^*}t_2)\ge 0, \hbox{ for all }(t_1,t_2)\in \Lambda
_{2 {\rm Re}\, \Psi }.
\end{equation}

\medskip
\noindent
Here, we shall relate the involutions $\iota _{\Phi _2^*}$ and $\iota _{\Phi_2}$. From (\ref{eq2.3.1}) let us recall that $\iota _{\Phi _2}$ is given by
\begin{equation}
\label{eq3.26}
\iota _{\Phi _2}:\ \left(y,(2/i)\overline{\partial _y\Psi
  _2(x,\overline{y})}\right) \mapsto \left(x,(2/i)\partial _x\Psi _2(x,\overline{y})\right).
\end{equation}
We also know that the antilinear involution $\Gamma$, given in (\ref{eq3.11.2}), maps $\Lambda_{\Phi_2}$ bijectively onto $\Lambda_{\Phi_2^*}$, and since $\iota _{\Phi _2}$, $\iota _{\Phi _2^*}$ are the unique antilinear maps equal to the identity on $\Lambda _{\Phi _2}$ and
$\Lambda _{\Phi _2^*}$ respectively, it follows that
\begin{equation}
\label{eq3.27}
\iota _{\Phi _2^*}=\Gamma \iota _{\Phi _2}\Gamma .
\end{equation}

\medskip
\noindent
From (\ref{eq3.11.5}), let us recall that 
$$
\frac{1}{i}\sigma (\Gamma t,\Gamma s)=\overline{\frac{1}{i}\sigma (t,s)},
$$
so using (\ref{eq3.27}), we find that the second term in (\ref{eq3.25}) is equal to
$$
\frac{1}{i}\sigma (t_2,\Gamma \iota _{\Phi _2}\Gamma t_2)=\overline{\frac{1}{i}\sigma (\Gamma t_2,\iota _{\Phi _2}\Gamma
    t_2)}=\frac{1}{i}\sigma (\Gamma t_2,\iota _{\Phi _2}\Gamma
  t_2)=-\frac{1}{i}\sigma (\iota _{\Phi _2}\Gamma t_2,\Gamma t_2),
$$
where we also used the fact that $(1/i)\sigma (t,\iota _{\Phi _2}t)$ is real. Hence (\ref{eq3.24}) is equivalent, via (\ref{eq3.25}), to
\begin{equation}
\label{eq3.28}
\frac{1}{i}\sigma (t_1,\iota _{\Phi _1}t_1)-\frac{1}{i}\sigma (\iota
_{\Phi _2}\Gamma t_2 ,\Gamma t_2 )\ge 0,\ \forall (t_1,t_2)\in \Lambda _{2 {\rm Re}\, \Psi }.
\end{equation}

\medskip
\noindent
From (\ref{eq3.26}), we get
$$
\iota _{\Phi _2}\Gamma :\
\left(\overline{y},-\overline{\frac{2}{i}\overline{\partial _y\Psi
      _2(x,\overline{y})}} \right)
\mapsto
\left(x,\frac{2}{i}\partial _x\Psi _2(x,\overline{y}) \right),
$$
i.e.
\begin{equation}
\label{eq3.29}
\iota _{\Phi _2}\Gamma :\ \left(\theta ,\frac{2}{i}\partial _\theta
  \Psi _2(y,\theta ) \right)\mapsto \left(y,\frac{2}{i}\partial _y\Psi
_2(y,\theta )\right),
\end{equation}
where we changed the notation slightly for convenience.

\medskip
\noindent
Write,
$$
\Lambda _{2 {\rm Re}\, \Psi }\ni (t_1,t_2)=\left(x,\frac{2}{i}\partial _x\Psi (x,\theta
);\theta ,\frac{2}{i}\partial _\theta \Psi (x,\theta )\right),
$$
and put $t_3=\iota _{\Phi _2}\Gamma t_2$, so that by (\ref{eq3.29}),
$$
t_3=\left( y,\frac{2}{i}\partial _y\Psi _2 (y,\theta ) \right) ,
$$
where
$$
\left( \theta ,\frac{2}{i}\partial _\theta \Psi (x,\theta ) \right)=
\left( \theta ,\frac{2}{i}\partial _\theta \Psi_2 (y,\theta ) \right).
$$
Comparing with (\ref{eq3.20}), we see that $t_1 = \kappa (t_3)$. Since
$\Gamma t_2=\iota _{\Phi _2}^2 \Gamma t_2=\iota _{\Phi _2}t_3$, we see
that (\ref{eq3.28}) is equivalent to
\begin{equation}
\label{eq3.30}
\frac{1}{i}\sigma (t_1,\iota _{\Phi _1}t_1)-\frac{1}{i}\sigma
(t_3,\iota _{\Phi _2}t_3)\ge 0,\hbox{ when }t_1 = \kappa(t_3),
\end{equation}
which is precisely (\ref{eq3.22}) up to a change of notation. This completes the proof of the equivalence (i)$\Leftrightarrow$(ii) and of
the proposition.
\end{proof}

\bigskip
\noindent
Combining Proposition \ref{prop1} and Proposition \ref{prop2}, we see that the proof of the sufficiency part of Theorem \ref{theo_main} is now complete.

\bigskip
\noindent
{\it Remark}: Let $\kappa: \comp^{2n} \rightarrow \comp^{2n}$ be a complex linear canonical transformation, such that (\ref{eq3.1}) holds, where $\Phi_2$, $\Phi$ are strictly plurisubharmonic. It follows from (\ref{eq3.15}) that the holomorphic quadratic form $\Psi(x,y)$ depends only on $\kappa$ and on the weights $\Phi_2$, $\Phi$, but not on the choice of a non-degenerate phase function $\varphi(x,y,\theta)$, $(x,y,\theta) \in \comp^n_x \times \comp^n_y \times \comp^N_{\theta}$, such that
$$
\Lambda_{\varphi}' = {\rm Graph}(\kappa),
$$
where
$$
\Lambda_{\varphi}' = \{(x,\varphi'_x(x,y,\theta); y, -\varphi'_y(x,y,\theta));\,\, \varphi'_{\theta}(x,y,\theta) = 0\}.
$$
It follows that if $\psi(x,y,w)$, $(x,y,w) \in \comp^n_x \times \comp^n_y \times \comp^{N'}_{w}$, is a second non-degenerate phase function such that
$$
\Lambda_{\varphi}' = \Lambda_{\psi}' = {\rm Graph}(\kappa),
$$
then both $\varphi$ and $\psi$ give rise to the same Fourier integral operators, realized as bounded linear maps: $H_{\Phi_2}(\comp^n) \rightarrow H_{\Phi}(\comp^n)$.

\bigskip
\noindent
We shall finish this section by making some remarks concerning metaplectic Fourier integral operators in the complex domain, associated to canonical transformations that are strictly positive relative to $(\Lambda_{\Phi_1}, \Lambda_{\Phi_2})$. Let
\begeq
\label{eq3.31}
\kappa: \comp^{2n} \rightarrow \comp^{2n}
\endeq
be a complex linear canonical transformation which is strictly positive relative to $(\Lambda_{\Phi_1}, \Lambda_{\Phi_2})$. According to Theorem \ref{theo_main}, we then have
\begeq
\label{eq3.32}
\kappa(\Lambda_{\Phi_2}) = \Lambda_{\Phi},
\endeq
where $\Phi$ is a strictly plurisubharmonic quadratic form on $\comp^n$ such that
\begeq
\label{eq3.34}
\Phi_1(x) - \Phi(x) \asymp \abs{x}^2,\quad x\in \comp^n.
\endeq
Let
$$
Tu(x) = \int\!\!\! \int e^{i \phi(x,y,\theta)} a u(y)\,dy\, d\theta,\quad a\in \comp,
$$
be a Fourier integral operator associated to $\kappa$. As discussed above, it follows from~\cite{CGHS},~\cite{Sj82} that the operator $T$ can be realized by means of a suitable good contour and we then obtain a bounded operator
\begeq
\label{eq3.35}
T: H_{\Phi_2}(\comp^n) \rightarrow H_{\Phi}(\comp^n).
\endeq
It follows from (\ref{eq3.34}) that the inclusion map $H_{\Phi}(\comp^n) \rightarrow H_{\Phi_1}(\comp)$ is compact, and the operator $T: H_{\Phi_2}(\comp^n) \rightarrow H_{\Phi_1}(\comp^n)$ is therefore compact. The following sharpening is essentially well known, see~\cite{AV}.

\begin{prop}
\label{prop_trace}
The operator
$$
T: H_{\Phi_2}(\comp^n) \rightarrow H_{\Phi_1}(\comp^n)
$$
is of trace class, with the singular values $s_j(T)$ satisfying
\begeq
\label{eq3.36}
s_j(T) = {\cal O}(j^{-\infty}).
\endeq
\end{prop}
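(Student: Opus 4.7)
The plan is to factor $T$ through the natural continuous inclusion $H_\Phi(\comp^n)\hookrightarrow H_{\Phi_1}(\comp^n)$ and to estimate the singular values of that inclusion by a trace computation combined with a H\"older-type iteration.

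By (\ref{eq3.35}), the FIO $T$ is bounded as an operator $V: H_{\Phi_2}(\comp^n) \to H_\Phi(\comp^n)$. The hypothesis $\Phi \le \Phi_1$ gives $e^{-2\Phi_1} \le e^{-2\Phi}$, so that the inclusion $\iota: H_\Phi(\comp^n) \hookrightarrow H_{\Phi_1}(\comp^n)$ is a well defined contraction, and the operator $T$ of the proposition factors as $T = \iota \circ V$. Since $s_j(AB) \le \|A\|\, s_j(B)$ for any composition of bounded operators, we have $s_j(T) \le \|V\|\, s_j(\iota)$, reducing the problem to showing $s_j(\iota) = {\cal O}(j^{-\infty})$.

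A short computation from $(\iota f, \iota g)_{H_{\Phi_1}} = \int f \bar g\, e^{-2\Phi_1}\, L(dx)$ shows that $\iota^* \iota: H_\Phi \to H_\Phi$ is the Toeplitz operator with symbol $e^{-2(\Phi_1 - \Phi)}$. Using the fact that the reproducing kernel of $H_\Phi$ takes the form $K_\Phi(x,\bar y) = c_\Phi\, e^{2\Psi_\Phi(x,\bar y)}$, with $\Psi_\Phi$ the polarization of $\Phi$ and $c_\Phi>0$, we obtain
\begin{equation*}
\Tr(\iota^*\iota) = \int_{\comp^n} K_\Phi(x,\bar x)\, e^{-2\Phi_1(x)}\, L(dx) = c_\Phi \int_{\comp^n} e^{-2(\Phi_1 - \Phi)(x)}\, L(dx),
\end{equation*}
which is finite because $\Phi_1 - \Phi \asymp |x|^2$ is a positive definite real quadratic form on $\comp^n$. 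Hence $\iota$ is Hilbert-Schmidt.

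To pass from Hilbert-Schmidt to rapid singular value decay, iterate via intermediate weights. For $t \in [0,1]$, set $\Phi_t := \Phi + t(\Phi_1 - \Phi)$, which is strictly plurisubharmonic quadratic with $\Phi_t - \Phi_s \asymp (t-s)|x|^2$ for $s<t$. The trace computation above applies to each natural inclusion $\iota_{s,t}: H_{\Phi_s} \hookrightarrow H_{\Phi_t}$, showing it is Hilbert-Schmidt. For any $N \in \nat$, write
\begin{equation*}
\iota = \iota_{(N-1)/N,\,1} \circ \iota_{(N-2)/N,\,(N-1)/N} \circ \cdots \circ \iota_{0,\,1/N},
\end{equation*}
as a product of $N$ Hilbert-Schmidt operators. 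H\"older's inequality for Schatten norms gives $\iota \in S^{2/N}$, so $\sum_j s_j(\iota)^{2/N} < \infty$ and in particular $s_j(\iota) = o(j^{-N/2})$. Since $N$ is arbitrary, $s_j(\iota) = {\cal O}(j^{-\infty})$, and Proposition \ref{prop_trace} follows. The main technical point is the trace formula in the third paragraph, which relies crucially on the explicit form of the Bergman kernel for strictly plurisubharmonic quadratic weights; everything else is a clean combination of factorization and Schatten norm Hölder inequalities.
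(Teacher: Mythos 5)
Your proof is correct, but it follows a genuinely different route from the paper's. The paper conjugates with the semigroup $e^{tQ}$ of a quadratic operator $Q$ whose symbol is positive on $\Lambda_{\Phi_1}$: it writes $T = e^{-\delta Q}\,(e^{\delta Q}T)$, checks via the mapping property (\ref{eq3.37})--(\ref{eq3.38}) that $e^{\delta Q}T$ is bounded from $H_{\Phi_2}$ to $H_{\Phi_1}$ for small $\delta$, and then invokes the Ky Fan inequality together with the known rapid decay of the eigenvalues of the compact positive operator $e^{-\delta Q}$; this leans on the semigroup machinery of~\cite{HeSjSt},~\cite{HiPr},~\cite{HiPrVi}. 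You instead isolate all the singular-value decay in the inclusion $\iota: H_{\Phi}\hookrightarrow H_{\Phi_1}$, which the paper only observes to be compact, and you prove $s_j(\iota)={\cal O}(j^{-\infty})$ by a self-contained reproducing-kernel computation: the trace of $\iota^*\iota$ equals $c_\Phi\int e^{-2(\Phi_1-\Phi)}L(dx)<\infty$ because $K_\Phi(x,\overline{x})e^{-2\Phi_1(x)}=c_\Phi e^{-2(\Phi_1-\Phi)(x)}$, and the chain through the intermediate weights $\Phi_t$ combined with the Schatten--H\"older inequality upgrades Hilbert--Schmidt to $S^{2/N}$ for every $N$. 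This is more elementary (it uses only the Gaussian form of the Bergman kernel, already quoted in (\ref{eq3.52})) and arguably cleaner, at the cost of not exhibiting the semigroup structure that the paper exploits elsewhere. Two small points you should make explicit: (a) $\Phi_t=(1-t)\Phi+t\Phi_1$ is strictly plurisubharmonic because its Levi form is a convex combination of the positive Levi forms of $\Phi$ and $\Phi_1$ --- the positivity of $\Phi_1-\Phi$ alone would not give this; and (b) the H\"older inequality for Schatten quasi-norms with exponent $2/N<1$ is the one that follows from Horn's inequality $\prod_{j\le m}s_j(AB)\le\prod_{j\le m}s_j(A)s_j(B)$, or one can avoid it entirely by iterating $s_{j+k-1}(AB)\le s_j(A)s_k(B)$. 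With those remarks your argument is complete.
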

\begin{proof}
Let $q$ be a holomorphic quadratic form on $\comp^{2n}$ such that its restriction to $\Lambda_{\Phi_1}$ is real positive definite. Let us introduce the Weyl quantization of $q$, the operator $Q = q^w(x,D_x)$. The quadratic differential operator $Q$ is selfadjoint on $H_{\Phi_1}(\comp^n)$ with discrete spectrum, and let us consider the metaplectic Fourier integral operator $e^{t Q}$, $0\leq t \leq t_0 \ll 1$, acting on the space $H_{\Phi}(\comp^n)$. Using some well known arguments, explained in detail in~\cite{HeSjSt},~\cite{HiPr},~\cite{HiPrVi}, we see that for $t\in [0,t_0]$ with $t_0 > 0$ small enough, the operator $e^{tQ}$ is bounded,
\begeq
\label{eq3.37}
e^{tQ}: H_{\Phi}(\comp^n) \rightarrow H_{\Phi_t}(\comp^n),
\endeq
where $\Phi_t$ is a strictly plurisubharmonic quadratic form on $\comp^n$, depending smoothly on $t\geq 0$ small enough, such that
\begeq
\label{eq3.38}
\Phi_t(x) = \Phi(x) + {\cal O}(t)\abs{x}^2.
\endeq
Combining this observation with (\ref{eq3.34}) we conclude that there exists $\delta >0$ small enough such  that the operator
\begeq
\label{eq3.39}
e^{\delta Q} T: H_{\Phi_2}(\comp^n) \rightarrow H_{\Phi_1}(\comp^n)
\endeq
is bounded. Writing
\begeq
\label{eq3.40}
T = e^{-\delta Q} e^{\delta Q} T,
\endeq
and applying the Ky Fan inequalities, we get
$$
s_j(T) \leq s_j(e^{-\delta Q}) \norm{e^{\delta Q} T}_{{\cal L}(H_{\Phi_2},H_{\Phi_1})} = {\cal O}(j^{-\infty}).
$$
Here we have also used the fact that the singular values of the compact positive selfadjoint operator $e^{-\delta Q}$ on $H_{\Phi_1}(\comp^n)$ satisfy
$$
s_j(e^{-\delta Q}) = {\cal O}(j^{-\infty}).
$$
It follows that $T$ is of trace class and the proof of the proposition is complete.
\end{proof}

\section{Applications to Toeplitz operators}
\label{sect_four}
\setcounter{equation}{0}
The purpose of this section is to apply the point of view of Fourier integral operators in the complex domain, developed in the previous sections, to the study of Toeplitz operators in the Bargmann space, establishing Theorem \ref{T_bound}.

\medskip
\noindent
Let $\Phi_0$ be a strictly plurisubharmonic quadratic form on $\comp^n$ and let $p: \comp^n \rightarrow \comp$ be measurable. Associated to $p$ is the Toeplitz operator
\begeq
\label{eq4.1}
{\rm Top}(p) = \Pi_{\Phi_0} \circ p \circ \Pi_{\Phi_0}: H_{\Phi_0}(\comp^n) \rightarrow H_{\Phi_0}(\comp^n).
\endeq
Here
$$
\Pi_{\Phi_0}: L^2(\comp^n,e^{-2\Phi_0} L(dx)) \rightarrow H_{\Phi_0}(\comp^n)
$$
is the orthogonal projection. We shall always assume that when equipped with the natural domain
\begeq
\label{eq4.2}
{\cal D}({\rm Top}(p)) = \{u\in H_{\Phi_0}(\comp^n); p u\in L^2(\comp^n, e^{-2\Phi_0} L(dx))\},
\endeq
the operator ${\rm Top}(p)$ becomes densely defined.

\medskip
\noindent
For future reference, let us recall the link between the Toeplitz and Weyl quantizations on $\comp^n$. Let $p\in L^{\infty}(\comp^n)$, say. Then we have
\begeq
\label{eq4.3}
{\rm Top}(p) = a^w(x,D_x),
\endeq
where $a\in C^{\infty}(\Lambda_{\Phi_0})$ is given by
\begeq
\label{eq4.4}
a\left(x,\frac{2}{i}\frac{\partial \Phi_0}{\partial x}(x)\right)  = \left(\exp\left(\frac{1}{4} \left(\Phi''_{0,x\overline{x}}\right)^{-1} \partial_x \cdot \partial_{\overline{x}}\right)p\right)(x), \quad x\in \comp^n.
\endeq
See~\cite{G84},~\cite{Sj96}. Here $-\left(\Phi''_{0,x\overline{x}}\right)^{-1} \partial_x \cdot \partial_{\overline{x}}$ is a constant coefficient second order differential operator on $\comp^n$ whose symbol is the positive definite quadratic form
$$
\frac{1}{4} \left(\Phi''_{0,x \overline{x}}\right)^{-1} \overline{\xi} \cdot \xi>0, \quad 0 \neq \xi \in \comp^n \simeq \real^{2n},
$$
and therefore the operator in (\ref{eq4.4}) can be regarded as the forward heat flow acting on $p$.

\medskip
\noindent
In this section we shall be concerned with the question of when an operator of the form ${\rm Top}(p)$ is bounded,
$$
{\rm Top}(p) \in {\cal L}(H_{\Phi_0}(\comp^n), H_{\Phi_0}(\comp^n)),
$$
and following~\cite{BC94}, in doing so we shall only consider Toeplitz symbols of the form
\begeq
\label{eq4.5}
p = e^{2q},
\endeq
where $q$ is a complex-valued quadratic form on $\comp^n$. Let us first proceed to give an explicit criterion, guaranteeing that when equipped with the domain (\ref{eq4.2}), the operator ${\rm Top}(e^{2q})$ is densely defined. Recalling the decomposition (\ref{eq2.3.5}) and considering the unitary map
$$
H_{\Phi_0}(\comp^n) \ni u \mapsto u e^{-f}\in H_{\Phi_{\rm herm}}(\comp^n), \quad f(x) = \Phi_{0,xx}''x\cdot x,
$$
we may observe that the space $e^{f}{\cal P}(\comp^n) = \{e^f p; p\in {\cal P}(\comp^n)\}$ is dense in $H_{\Phi_0}(\comp^n)$. Here ${\cal P}(\comp^n)$ is the space of holomorphic polynomials on $\comp^n$. It follows that
$$
e^{f}{\cal P}(\comp^n) \subset {\cal D}({\rm Top}(e^{2q})),
$$
so that ${\rm Top}(e^{2q})$ is densely defined, provided that
\begeq
\label{eq4.6}
2 {\rm Re}\, q(x) < \Phi_{{\rm herm}}(x),
\endeq
in the sense of quadratic forms on $\comp^n$.

\bigskip
\noindent
Recalling (\ref{eq3.52}), we may write
\begeq
\label{eq4.7}
{\rm Top}(e^{2q}) u(x) = C \int e^{2\left(\Psi_0(x,\overline{y}) - \Phi_0(y)\right)} e^{2 q(y,\overline{y})} u(y)\, dy\, d\overline{y}, \quad u \in {\cal D}({\rm Top}(e^{2q})).
\endeq
Here $C>0$ and $\Psi_0$ is the polarization of $\Phi_0$. Similarly to (\ref{eq3.18}), we get
\begeq
\label{eq4.8}
{\rm Top}(e^{2q})u(x) = C \int\!\!\!\int_{\Gamma} e^{2(\Psi_0(x,\theta) - \Psi_0(y,\theta) + q(y,\theta))} u(y)\, dy\, d\theta,
\endeq
where $\Gamma$ is the contour in $\comp^{2n}$, given by $\theta = \overline{y}$. Here the holomorphic quadratic form
\begeq
\label{eq4.9}
F(x,y,\theta) = \frac{2}{i}\left(\Psi_0(x,\theta) - \Psi_0(y,\theta) + q(y,\theta)\right)
\endeq
is a non-degenerate phase function in the sense of H\"ormander, in view of the fact that ${\rm det}\, \Psi''_{0,x\theta} \neq 0$, and therefore the operator ${\rm Top}(e^{2q})$ in (\ref{eq4.8}) can be viewed as a metaplectic Fourier integral operator associated to a suitable canonical relation
$\subset \comp^{2n} \times \comp^{2n}$. We have the formal factorization
$$
{\rm Top}(e^{2q}) = AB,
$$
where
\begeq
\label{eq4.10}
A v(x) = \int e^{2 \Psi_0(x,\theta)} v(\theta)\, d\theta,\quad  B u(\theta) = \int e^{-2\widetilde{\Psi}_0(y,\theta)} u(y)\,dy,
\endeq
and where we have written $\widetilde{\Psi}_0(y,\theta) = \Psi_0(y,\theta) - q(y,\theta)$. Here the operator $A$, formally, is an elliptic Fourier integral operator associated to the canonical transformation
$$
(\theta, - \frac{2}{i} \partial_{\theta} \Psi_0(x,\theta)) \mapsto (x, \frac{2}{i}\partial_x \Psi_0(x,\theta)).
$$
It follows that the canonical relation associated to ${\rm Top}(e^{2q})$ is the graph of a canonical transformation if and only if this is the case for the Fourier integral operator $B$. We conclude that the operator ${\rm Top}(e^{2q})$ in (\ref{eq4.8}) is associated to a canonical transformation precisely when
\begeq
\label{eq4.11}
\partial_y \partial_{\theta} \widetilde{\Psi}_0 \neq 0.
\endeq
The condition (\ref{eq4.11}) is equivalent to the assumption (\ref{eq1.8}) in Theorem \ref{T_bound}. The canonical transformation is then given by
\begeq
\label{eq4.11.1}
\kappa: (y, - \partial_{y} F(x,y,\theta)) \mapsto (x, \partial_x F(x,y,\theta)),\quad \partial_{\theta} F(x,y,\theta) = 0.
\endeq

\bigskip
\noindent
{\it Example}. In the following discussion, we shall revisit the family of examples discussed in Section 6 of~\cite{BC94} and show how the point of view of Fourier integral operators in the complex domain, developed above, allows one to recover the main findings of Section 6 in~\cite{BC94}, obtained there by means of a direct computation.

\medskip
\noindent
Let $\Phi_0(x) = (1/2) \abs{x}^2$ and $q = (\lambda/2) \abs{y}^2$, $\lambda \in \comp$ with ${\rm Re}\, \lambda < 1/2$. Here the restriction on
${\rm Re}\, \lambda$ implies that (\ref{eq4.6}) holds, so that the operator ${\rm Top}(e^{2q})$ is densely defined in $H_{\Phi_0}(\comp^n)$. We have
$$
\Psi_0(x,y) = \frac{1}{2} x \cdot y,
$$
and the phase function $F$ in (\ref{eq4.9}) is given by
\begeq
\label{eq4.12}
F(x,y,\theta) = \frac{2}{i}\left(\frac{1}{2} x\cdot \theta - \left(\frac{1-\lambda}{2}\right)y\cdot \theta\right).
\endeq
In particular, the condition (\ref{eq4.11}) is satisfied and we may then compute the canonical transformation $\kappa$ associated to the corresponding Fourier integral operator ${\rm Top}(e^{2q})$ in (\ref{eq4.8}).

\medskip
\noindent
The critical set $C_F$ of the phase $F$ is given by $\partial_{\theta} F = 0 \Longleftrightarrow x = (1-\lambda)y$, and the corresponding canonical transformation $\kappa$ is of the form
\begeq
\label{eq4.13}
\kappa: (y, - \partial_{y} F(x,y,\theta)) \mapsto (x, \partial_x F(x,y,\theta)),\quad (x,y,\theta) \in C_F.
\endeq
It follows that $\kappa$ is given by
\begeq
\label{eq4.14}
\kappa: (y, \eta) \mapsto \left((1-\lambda)y, \frac{\eta}{1-\lambda}\right).
\endeq
We shall now determine when the canonical transformation $\kappa$ is positive relative to $\Lambda_{\Phi_0}$, which can be done by a direct computation: it follows from (\ref{eq2.3.1}) that the involution $\iota_{\Phi_0}$ is given by
\begeq
\label{eq4.15}
\iota_{\Phi_0}: (y,\eta) \mapsto \left(\frac{1}{i}\overline{\eta}, \frac{1}{i} \overline{y}\right),
\endeq
and therefore, we may compute,
\begin{multline}
\label{eq4.16}
\frac{1}{i} \sigma(\kappa(y,\eta), \iota_{\Phi_0} \kappa(y,\eta)) = \frac{1}{i} \sigma \left(((1-\lambda)y, \frac{\eta}{1-\lambda}), (\frac{1}{i} \frac{\overline{\eta}}{1-\overline{\lambda}}, \frac{1}{i}(1-\overline{\lambda}) \overline{y})\right) \\
= \abs{1-\lambda}^2 \abs{y}^2 - \frac{\abs{\eta}^2}{\abs{1-\lambda}^2}.
\end{multline}
Similarly, we have
\begeq
\label{eq4.17}
\frac{1}{i} \sigma((y,\eta),\iota_{\Phi_0}(y,\eta)) = \abs{y}^2 - \abs{\eta}^2.
\endeq
Combining (\ref{eq4.16}), (\ref{eq4.17}) we see that the $\kappa$ is positive relative to $\Lambda_{\Phi_0}$ if and only if
\begeq
\label{eq4.18}
\abs{1-\lambda} \geq 1.
\endeq
This condition occurs in~\cite{BC94}, pp. 581, 582 (with the inessential difference that in the discussion in~\cite{BC94} one considers $\Phi_0(x) = \abs{x}^2/4$), where it is verified that the operator ${\rm Top}(e^{2q})\in {\cal L}(H_{\Phi_0}(\comp^n), H_{\Phi_0}(\comp^n))$ precisely when (\ref{eq4.18}) holds.

\medskip
\noindent
In the case when the strict inequality holds in (\ref{eq4.18}), the canonical transformation $\kappa$ in (\ref{eq4.14}) is strictly positive relative to $\Lambda_{\Phi_0}$ and it follows from Proposition \ref{prop_trace} that the Toeplitz operator ${\rm Top}(e^{2q})$ is of trace class on $H_{\Phi_0}(\comp^n)$.

\medskip
\noindent
We shall now proceed to discuss the "boundary" case when
\begeq
\label{eq4.19}
\abs{1-\lambda} = 1.
\endeq
In this case, using (\ref{eq4.14}) we immediately see that $\kappa(\Lambda_{\Phi_0}) = \Lambda_{\Phi_0}$, and therefore we conclude, in view of~\cite{CGHS},~\cite{Sj82}, that the operator
\begeq
\label{eq4.20}
{\rm Top}(e^{2q}): H_{\Phi_0}(\comp^n) \rightarrow H_{\Phi_0}(\comp^n)
\endeq
is bounded, with a bounded two-sided inverse.

\medskip
\noindent
We claim next that the operator in (\ref{eq4.20}) is in fact unitary when (\ref{eq4.19}) holds, and when verifying the unitarity, it will be convenient to pass to the Weyl quantization, computing the Weyl symbol of ${\rm Top}(e^{2q})$. It follows from (\ref{eq4.4}) that
\begeq
\label{eq4.21}
a\left(x,\frac{2}{i}\frac{\partial \Phi_0}{\partial x}(x)\right) = \left(\exp\left(\frac{1}{8}\Delta\right)e^{2q}\right)(x) =
\left(\frac{2}{\pi}\right)^n \int_{{\bf C}^n} e^{-2\abs{x-y}^2} e^{\lambda\abs{y}^2}\,L(dy).
\endeq
Here $\Delta$ is the Laplacian on $\comp^n \simeq \real^{2n}$. Computing the Gaussian integral in (\ref{eq4.21}) by the exact version of stationary phase, we get, see also~\cite{BC94},
\begeq
\label{eq4.22}
a\left(x,\frac{2}{i}\frac{\partial \Phi_0}{\partial x}(x)\right) = \left(\frac{2}{2-\lambda}\right)^n \exp\left(\frac{2\lambda}{2-\lambda} \abs{x}^2\right).
\endeq
Here we may notice that
$$
{\rm Re}\left(\frac{2\lambda}{2-\lambda}\right)= 0,
$$
when (\ref{eq4.19}) holds, reflecting the fact that the associated canonical transformation in (\ref{eq4.14}) is "real" in this case. We conclude that the Weyl symbol of the Toeplitz operator ${\rm Top}(e^{2q})$ is given by
\begeq
\label{eq4.23}
a(x,\xi) = \left(\frac{2}{2-\lambda}\right)^n \exp(iF(x,\xi)), \quad F(x,\xi) = \frac{2\lambda}{2-\lambda} x\cdot \xi,
\endeq
so that
\begeq
\label{eq4.24}
{\rm Top}(e^{2q}) = \left(\frac{2}{2-\lambda}\right)^n \left(\exp(iF)\right)^w.
\endeq
We have $({\rm Im}\, F)|_{\Lambda_{\Phi_0}} = 0$ and an application of Proposition 5.11 of~\cite{H95} together with the metaplectic invariance of the Weyl quantization allows us to conclude that the operator
\begeq
\label{eq4.25}
\sqrt{{\rm det}(I-{\cal F}/2)} \left(\exp(iF)\right)^w: H_{\Phi_0}(\comp^n) \rightarrow H_{\Phi_0}(\comp^n)
\endeq
is unitary. Here ${\cal F}$ is the Hamilton map of $F$, i.e. the matrix of the (linear) Hamilton field $H_F$, and it remains therefore to check that
\begeq
\label{eq4.26}
\sqrt{{\rm det}(I-{\cal F}/2)} = \left(\frac{2}{2-\lambda}\right)^n e^{i\theta},\quad \theta \in \real.
\endeq
To this end, we compute using (\ref{eq4.23}),
$$
\frac{1}{2}{\cal F} = \frac{\lambda}{2-\lambda} \begin{pmatrix}1 &0\\ 0 &-1\end{pmatrix},\quad I-\frac{1}{2}{\cal F} = \frac{2}{2-\lambda} \begin{pmatrix}1-\lambda &0\\ 0 &1\end{pmatrix},
$$
and (\ref{eq4.26}) follows, thanks to (\ref{eq4.19}). We conclude therefore that the Toeplitz operator ${\rm Top}(e^{2q})$ is unitary on $H_{\Phi_0}(\comp^n)$, when ${\rm Re}\, \lambda < 1/2$ and (\ref{eq4.19}) holds. The unitarity property has also been observed in~\cite{BC94}.

\medskip
\noindent
{\it Remark}. In the case when ${\rm Re}\, \lambda < 1/2$, $\abs{1 - \lambda} > 1$, we observed that the operator ${\rm Top}(e^{2q})$ is of trace class on $H_{\Phi_0}(\comp^n)$, and we get, using (\ref{eq4.23}) and the metaplectic invariance of the Weyl quantization,
$$
{\rm tr}\, {\rm Top}(e^{2q}) = \frac{1}{(2\pi)^n} \int\!\!\!\int_{\Lambda_{\Phi_0}} a \frac{(\sigma|_{\Lambda_{\Phi_0}})^n}{n!},
$$
where $a$ is given in (\ref{eq4.23}).

\bigskip
\noindent
We are now ready to discuss the proof of Theorem \ref{T_bound}. It follows from Theorem \ref{theo_main} and the discussion in this section that it suffices to check that the canonical transformation (\ref{eq4.11.1}) associated to the operator ${\rm Top}(e^{2q})$ is positive relative to $\Lambda_{\Phi_0}$. To this end, let us consider the Weyl symbol of ${\rm Top}(e^{2q})$, given by (\ref{eq4.4}),
\begeq
\label{4.27}
a(x,\xi) = \left(\exp\left(\frac{1}{4} \left(\Phi''_{0,x\overline{x}}\right)^{-1} \partial_x \cdot \partial_{\overline{x}}\right) e^{2q}\right)(x),\quad (x,\xi) \in \Lambda_{\Phi_0}.
\endeq
A simple computation of the inverse Fourier transform of a real Gaussian shows that
\begeq
\label{eqrem.10.1}
a(x,\xi) = C_{\Phi_0} \int_{{\bf C}^n} \exp(-4 \Phi_{{\rm herm}}(x-y)) e^{2q(y)}\, L(dy),\quad C_{\Phi_0}\neq 0.
\endeq
Here the convergence of the integral in (\ref{eqrem.10.1}) is guaranteed by (\ref{eq4.6}). In view of the exact version of stationary phase, it is therefore clear that
\begeq
\label{rem.11}
a(x,\xi) = C \exp(iF(x,\xi)), \quad (x,\xi) \in \Lambda_{\Phi_0},
\endeq
for some constant $C\neq 0$, where $F$ is a holomorphic quadratic form on $\comp^{2n}$. Proposition \ref{prop_Weyl} shows that the positivity of $\kappa$ in (\ref{eq4.11.1}) relative to $\Lambda_{\Phi_0}$ is equivalent to the fact that the Weyl symbol in (\ref{rem.11}) is such that ${\rm Im}\, F|_{\Lambda_{\Phi_0}} \geq 0 \Longleftrightarrow\exp(iF)\in L^{\infty}(\Lambda_{\Phi_0})$. The proof of Theorem \ref{T_bound} is complete.

\begin{appendix}
\section{Schwartz kernel theorem in the $H_{\Phi}$--setting}
\label{appA}
\setcounter{equation}{0}
In this appendix we shall make some elementary remarks concerning integral representations for linear continuous maps between weighted spaces of
holomorphic functions. Such observations are essentially well known, see for instance~\cite{P90}.

\medskip
\noindent
Let $\Omega_j \subset \comp^{n_j}$ be open, $j=1,2$, and let $\Phi_j \in C(\Omega_j; \real)$. We introduce the weighted spaces
\begeq
\label{eqS.1}
H_{\Phi_j}(\Omega_j) = {\rm Hol}(\Omega_j) \cap L^2(\Omega_j, e^{-2\Phi_j} L(dy_j)),\quad j =1,2,
\endeq
where $L(dy_j)$ is the Lebesgue measure on $\comp^{n_j}$. When viewed as closed subspaces of $L^2(\Omega_j, e^{-2\Phi_j} L(dy_j))$, the
spaces $H_{\Phi_j}(\Omega_j)$ are separable complex Hilbert spaces and the natural embeddings
$H_{\Phi_j}(\Omega_j) \rightarrow {\rm Hol}(\Omega_j)$ are continuous. Here the space ${\rm Hol}(\Omega_j)$ is equipped with its natural Fr\'echet space
topology of locally uniform convergence.
Let
\begeq
\label{eqS.2}
T: H_{\Phi_1}(\Omega_1) \rightarrow H_{\Phi_2}(\Omega_2)
\endeq
be a linear continuous map. Let us also write $\overline{\Omega_1} = \{z\in \comp^{n_1};\,\, \overline{z}\in \Omega_1\}$.

\begin{theo}
\label{theo_kernel}
There exists a unique function $K(x,z)\in {\rm Hol}(\Omega_2 \times \overline{\Omega_1})$ such that
\begeq
\label{eqS.21}
\Omega_1 \ni y \mapsto \overline{K(x,\overline{y})} \in H_{\Phi_1}(\Omega_1),
\endeq
for each $x\in \Omega_2$, and
\begeq
\label{eqS.22}
Tf(x) = \int_{\Omega_1} K(x,\overline{y}) f(y) e^{-2\Phi_1(y)}\, L(dy),\quad f\in H_{\Phi_1}(\Omega_1).
\endeq
We also have
\begeq
\label{eqS.23}
\Omega_2 \ni x \mapsto K(x,z)\in H_{\Phi_2}(\Omega_2),
\endeq
for each $z\in \overline{\Omega_1}$.
\end{theo}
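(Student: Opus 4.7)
The plan is to construct $K$ via the Riesz representation theorem applied, for each fixed $x\in \Omega_2$, to the continuous linear functional $f\mapsto Tf(x)$ on $H_{\Phi_1}(\Omega_1)$. The preliminary observation is that for each $w\in \Omega_j$, point evaluation $\delta_w: H_{\Phi_j}(\Omega_j)\to \comp$ is a continuous linear functional: this follows from the sub-mean value inequality applied to $\abs{f}^2$ combined with local continuity of the weight $e^{-2\Phi_j}$, which gives a bound $\sup_{w\in K}\abs{f(w)}^2\le C_K \norm{f}^2_{H_{\Phi_j}}$ on any compact $K\subset \Omega_j$. Consequently each $H_{\Phi_j}(\Omega_j)$ is a reproducing kernel Hilbert space, and I denote the Riesz representer by $k_{\Phi_j}(\cdot,w)\in H_{\Phi_j}(\Omega_j)$, so that $f(w)=\langle f,k_{\Phi_j}(\cdot,w)\rangle_{H_{\Phi_j}}$.

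The composed functional $f\mapsto Tf(x)$ is continuous on $H_{\Phi_1}$, so Riesz provides a unique $g_x\in H_{\Phi_1}(\Omega_1)$ with
$$
Tf(x)=\langle f,g_x\rangle_{H_{\Phi_1}}=\int_{\Omega_1}f(y)\overline{g_x(y)}\,e^{-2\Phi_1(y)}\,L(dy).
$$
I then set $K(x,z):=\overline{g_x(\overline{z})}$ for $z\in \overline{\Omega_1}$. For each fixed $x$, the function $g_x$ is holomorphic on $\Omega_1$, so $z\mapsto g_x(\overline{z})$ is antiholomorphic on $\overline{\Omega_1}$ and $K(x,\cdot)$ is holomorphic there. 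By construction $y\mapsto \overline{K(x,\overline{y})}=g_x(y)\in H_{\Phi_1}(\Omega_1)$, giving \eqref{eqS.21}, and the Riesz identity above gives \eqref{eqS.22}.

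Holomorphy and $H_{\Phi_2}$-membership in the $x$-variable rest on the identity
$$
T\bigl(k_{\Phi_1}(\cdot,w)\bigr)(x)=\langle T(k_{\Phi_1}(\cdot,w)),k_{\Phi_2}(\cdot,x)\rangle_{H_{\Phi_2}}=\langle k_{\Phi_1}(\cdot,w),T^*k_{\Phi_2}(\cdot,x)\rangle_{H_{\Phi_1}}=\overline{g_x(w)},
$$
where in the last step I used that the definition of $g_x$ is precisely $g_x=T^*k_{\Phi_2}(\cdot,x)$. Taking complex conjugates and substituting $w=\overline{z}$ yields the compact formula $K(x,z)=T\bigl(k_{\Phi_1}(\cdot,\overline{z})\bigr)(x)$, which exhibits $x\mapsto K(x,z)$ as an element of $H_{\Phi_2}(\Omega_2)$ for each fixed $z\in \overline{\Omega_1}$, thereby giving \eqref{eqS.23} and, in particular, holomorphy in $x$. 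Joint holomorphy on $\Omega_2\times \overline{\Omega_1}$ then follows from separate holomorphy by Hartogs' theorem.

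Uniqueness is immediate: if $K_1,K_2$ both satisfy the conclusions, then for each $x\in \Omega_2$ the function $y\mapsto \overline{K_1(x,\overline{y})-K_2(x,\overline{y})}$ belongs to $H_{\Phi_1}(\Omega_1)$ and is orthogonal to all of $H_{\Phi_1}(\Omega_1)$, hence vanishes. The substantive obstacle is essentially bookkeeping: tracking the several complex conjugations so that $K$ emerges jointly holomorphic on $\Omega_2\times \overline{\Omega_1}$ rather than on some antiholomorphic counterpart. The reproducing kernel identity $K(x,z)=T(k_{\Phi_1}(\cdot,\overline{z}))(x)$ renders both holomorphy assertions transparent at once.
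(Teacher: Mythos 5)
Your proof is correct, and it shares the paper's starting point --- the Riesz representation of the continuous functional $f\mapsto Tf(x)$ on $H_{\Phi_1}(\Omega_1)$, yielding $g_x=T^*k_{\Phi_2}(\cdot,x)$ and the definition $K(x,z)=\overline{g_x(\overline z)}$ --- but it diverges in how joint holomorphy is obtained. The paper never invokes Hartogs: it expands $k_x=\sum_j\overline{Te_j(x)}\,e_j$ in an orthonormal basis, proves an elementary lemma (via the sub-mean value property and Cauchy--Schwarz) that a pointwise convergent series $\sum|f_j|^2$ of holomorphic functions with locally integrable sum converges locally uniformly, and thereby realizes $K(x,z)=\sum_j Te_j(x)\overline{e_j(\overline z)}$ as a locally uniform limit of jointly holomorphic functions; the analogous expansion in a basis of $H_{\Phi_2}(\Omega_2)$ then gives \eqref{eqS.23}. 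Your route is shorter and arguably cleaner: the identity $K(x,z)=T\bigl(k_{\Phi_1}(\cdot,\overline z)\bigr)(x)$ delivers \eqref{eqS.23} and holomorphy in $x$ in one stroke, and separate holomorphy plus Hartogs finishes the job. What you give up is self-containedness --- Hartogs' theorem on separate analyticity is a genuinely nontrivial input, whereas the paper's argument is elementary throughout and, as a byproduct, produces the formula $\norm{k_x}^2_{\Phi_1}=\sum_j|Te_j(x)|^2$ used in the subsequent remark to characterize Hilbert--Schmidt operators. If you want to avoid the full strength of Hartogs, note that the bound $|K(x,z)|\le\norm{k_{\Phi_1}(\cdot,\overline z)}_{\Phi_1}\,\norm{g_x}_{\Phi_1}$, with both factors locally bounded by your own preliminary observation and by $\norm{g_x}_{\Phi_1}=\sup_{\norm{f}\le 1}|Tf(x)|$, makes $K$ locally bounded, so Osgood's lemma (separate holomorphy plus local boundedness) suffices.
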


\medskip
\noindent
When proving Theorem \ref{theo_kernel}, we observe that it follows from the remarks above that for each $x\in \Omega_2$, the linear form
\begeq
\label{eqS.3}
H_{\Phi_1}(\Omega_1) \ni f \mapsto \left(Tf\right)(x) \in \comp
\endeq
is continuous, and there exists therefore a unique element $k_x\in H_{\Phi_1}(\Omega_1)$ such that for all $f\in H_{\Phi_1}(\Omega_1)$, we have
\begeq
\label{eqS.4}
Tf(x) = (f, k_x)_{\Phi_1},\quad x \in \Omega_2.
\endeq
Here and in what follows $(\cdot,\cdot)_{\Phi_j}$ stands for the scalar product in the space $H_{\Phi_j}(\Omega_j)$, $j=1,2$.

\medskip
\noindent
Letting $(e_j)$ be an orthonormal basis for $H_{\Phi_1}(\Omega_1)$, we may write with convergence in $H_{\Phi_1}(\Omega_1)$, for each $x\in \Omega_2$ fixed,
\begeq
\label{eqS.5}
k_x = \sum_{j=1}^{\infty} (k_x, e_j)_{\Phi_1} e_j  = \sum_{j=1}^{\infty} \overline{Te_j(x)} e_j.
\endeq
By Parseval's formula we get
\begeq
\label{eqS.6}
\norm{k_x}^2_{\Phi_1} = \sum_{j=1}^{\infty} \abs{Te_j(x)}^2, \quad x\in \Omega_2.
\endeq
Here we know that
\begeq
\label{eqS.7}
\norm{k_x}_{\Phi_1} = \sup_{\norm{f}_{\Phi_1} \leq 1} \abs{Tf(x)},
\endeq
and it follows that the function $\Omega_2 \ni x \mapsto \norm{k_x}_{\Phi_1}$ is locally bounded. Let us now make the following elementary observation: let $\Omega \subset \comp^n$ be open and let $f_n \in {\rm Hol}(\Omega)$ be such that the series
\begeq
\label{eqS.7.1}
\sum_{n=1}^{\infty} \abs{f_n(z)}^2
\endeq
converges for each $z\in \Omega$, with the sum being locally integrable in $\Omega$. Then the series converges locally uniformly in $\Omega$. Indeed, let us write
$$
\sum_{n=1}^{\infty} \abs{f_n(z)}^2 =: F(z)\in L^{1}_{{\rm loc}}(\Omega).
$$
Let $K\subset \Omega$ be compact and let $\omega$ be an open neighborhood of $K$ such that $K\subset \omega\subset \subset \Omega$. Then by Cauchy's integral formula and the Cauchy-Schwarz inequality we have
$$
\sup_{K} \abs{f_n}^2 \leq {\cal O}_{K,\omega}(1) \norm{f_n}_{L^2(\omega)}^2.
$$
We get therefore the uniform bound
$$
\sum_{n=1}^N  \sup_{K} \abs{f_n}^2 \leq {\cal O}_{K,\omega}(1) \norm{F}_{L^1(\omega)},\quad  N=1,2,\ldots,
$$
implying the locally uniform convergence of (\ref{eqS.7.1}).

\medskip
\noindent
It follows that (\ref{eqS.6}) holds with locally uniform convergence in $x\in \Omega_2$, and in particular the function $\Omega_2 \ni x \mapsto \norm{k_x}_{\Phi_1}^2$ is continuous plurisubharmonic. We may therefore conclude that the series in (\ref{eqS.5}) converges locally uniformly in $\Omega_1 \times \Omega_2$.  Letting
\begeq
\label{eqS.8}
K(x,z) := \overline{k_x(\overline{z})} = \sum_{j=1}^{\infty} Te_j(x) \overline{e_j(\overline{z})},
\endeq
we conclude that $K \in {\rm Hol}(\Omega_2 \times \overline{\Omega_1})$ is such that (\ref{eqS.21}) and (\ref{eqS.22}) hold, and these properties characterize the kernel $K$ uniquely.

\medskip
\noindent
When verifying (\ref{eqS.23}), we let $\widetilde{k}_x\in H_{\Phi_2}(\Omega_2)$ be the reproducing kernel for $H_{\Phi_2}(\Omega_2)$. We may then write,
when $f\in H_{\Phi_1}(\Omega_1)$, $x \in \Omega_2$,
\begeq
\label{eqS.81}
Tf(x) = (Tf, \widetilde{k}_x)_{\Phi_2} = (f, T^* \widetilde{k}_x)_{\Phi_1},
\endeq
and therefore,
\begeq
\label{eqS.9}
k_x = T^* \widetilde{k}_x.
\endeq
Here
$$
T^*: H_{\Phi_2}(\Omega_2) \rightarrow H_{\Phi_1}(\Omega_1)
$$
is the adjoint of $T$. Letting $(f_j)$ be an orthonormal basis for $H_{\Phi_2}(\Omega_2)$ and recalling that
\begeq
\label{eqS.10}
\widetilde{k}_x = \sum_{j=1}^{\infty} \overline{f_j(x)} f_j,
\endeq
we get
\begeq
\label{eqS.11}
k_x(y) = \sum_{j=1}^{\infty} \overline{f_j(x)} T^* f_j(y),
\endeq
Therefore,
$$
K(x,\overline{y}) = \sum_{j=1}^{\infty} f_j(x) \overline{T^* f_j(y)}.
$$
and we see that (\ref{eqS.23}) follows. We also get
\begeq
\label{eqS.12}
\norm{K(\cdot, \overline{y})}_{\Phi_2}^2 = \sum_{j=1}^{\infty} \abs{T^* f_j(y)}^2.
\endeq

\bigskip
\noindent
{\it Remark.} It follows from (\ref{eqS.6}) that $T\in {\cal L}(H_{\Phi_1}(\Omega_1), H_{\Phi_2}(\Omega_2))$ is of Hilbert-Schmidt class precisely when
$$
\int\!\!\! \int_{\Omega_1 \times \Omega_2} \abs{K(x,\overline{y})}^2 e^{-2(\Phi_1(y) + \Phi_2(x))}\, L(dy)\, L(dx) < \infty.
$$

\medskip
\noindent
{\it Remark.} An alternative proof of Theorem \ref{theo_kernel} can be obtained by applying the Schwartz kernel theorem directly to the linear continuous map
$$
\Pi_{\Phi_2} T \Pi_{\Phi_1}: L^2(\Omega_1, e^{-2\Phi_1} L(dy_1)) \rightarrow L^2(\Omega_2, e^{-2\Phi_2} L(dy_2)).
$$
Here
$$
\Pi_{\Phi_j}: L^2(\Omega_j, e^{-2\Phi_j} L(dy_j)) \rightarrow H_{\Phi_j}(\Omega_j)
$$
is the orthogonal projection. Writing the Schwartz kernel of $\Pi_{\Phi_2} T \Pi_{\Phi_1}$ in the form $K(x,\overline{y}) e^{-2\Phi_1(y)}$, we see that $K$ should satisfy $\partial_{\overline{x}} K(x,\overline{y}) = 0$. Now the distribution kernel of the adjoint $\Pi_{\Phi_1} T^* \Pi_{\Phi_2}$ is given by
$\overline{K(y, \overline{x})} e^{-2\Phi_2(y)}$, and it follows that $\partial_{\overline{x}} \left(\overline{K(y, \overline{x})}\right) = 0$. We get
$\partial_x \left(K(y, \overline{x})\right) = 0$, so that $\left(\partial_{\overline{y}} K\right)(y, \overline{x}) = 0 \Longleftrightarrow \partial_{\overline{y}} K(x,y) = 0$. We conclude that $K(x,y)$ is holomorphic in $(x,y)$.

\section{Positivity and Weyl quantization}
\label{appB}
\setcounter{equation}{0}

The purpose of this appendix is to characterize the boundedness of the Weyl quantization of a symbol of the form $\exp (iF(x,\xi))$, where $F$ a
complex quadratic form, in the $H_{\Phi}$-setting. See also~\cite{H95} for a related discussion in the context of $L^2$--boundedness.

\medskip
\noindent
Let $F=F(x,\xi )$ be a complex valued holomorphic quadratic form on ${\bf C}^{2n}$ and let us consider formally the Weyl quantization of $e^{iF(x,\xi )}$, \begin{equation}
\label{rem.0}
Au(x)=\mathrm{Op}^w(e^{iF})u(x)=\frac{1}{(2\pi )^n}\iint e^{i((x-y)\cdot \theta +F((x+y)/2,\theta ))}u(y)dy d\theta .
\end{equation}
The holomorphic quadratic form $(x-y)\cdot \theta +F((x+y)/2,\theta )$ is a non-degenerate phase function in the sense of H\"ormander and generates a canonical relation
\begin{equation}
\label{rem.1}
\kappa :\ (y,\eta )\mapsto (x,\xi ),
\end{equation}
given by
\begin{equation}\label{rem.2}
\begin{split}
x&=\frac{x+y}{2}-\frac{1}{2}F'_\xi (\frac{x+y}{2},\theta ),\\
y&=\frac{x+y}{2}+\frac{1}{2}F'_\xi (\frac{x+y}{2},\theta ),\\
\xi& =\theta +\frac{1}{2}F'_x (\frac{x+y}{2},\theta ),\\
\eta &=\theta -\frac{1}{2}F'_x (\frac{x+y}{2},\theta ).
\end{split}
\end{equation}
The graph is parametrized by $\rho =((x+y)/2,\theta )\in {\bf C}^{2n}$ and (\ref{rem.1}), (\ref{rem.2}) take the form
\begin{equation}
\label{rem.3}
\kappa :\ \rho +\frac{1}{2}H_F(\rho )\mapsto \rho -\frac{1}{2}H_F(\rho ),
\end{equation}
where $H_F(\rho )=(F'_\xi (\rho ),-F'_x(\rho ))$ is the Hamilton field of $F$ at $\rho $.

\medskip
\noindent
We shall now give a criterion for when $\kappa $ in (\ref{rem.3}) is a canonical transformation. Recall that $H_F(\rho) = {\cal F}\rho $, where
$$
{\cal F}= \begin{pmatrix}F''_{\xi x} &F''_{\xi \xi }\\
-F''_{xx} &-F''_{x\xi }\end{pmatrix}
$$
is the fundamental matrix of $F$ (usually appearing as the linearization of a Hamilton vector field, which in our case is already
linear). We have
$$
{\cal F} = JF'',\ J=\begin{pmatrix}0 &1\\ -1 &0\end{pmatrix},\
F'' = \begin{pmatrix}F''_{xx} &F''_{x\xi }\\ F''_{\xi x} &F''_{\xi \xi }\end{pmatrix},
$$
and we notice that $J^2=-1$, $J^\mathrm{t}=-J$. Then (\ref{rem.3}) is the relation
\begin{equation}
\label{rem.3.1}
\left(1 +{\cal F}/2\right)\rho \mapsto \left(1-{\cal F}/2 \right)\rho
\end{equation}
Now ${\cal F}$ is antisymmetric with respect to the bilinear form
$\sigma (\nu  , \mu )= J\nu \cdot \mu $, hence $1-{\cal F}/2$ is
bijective if and only if its transpose $1+{\cal F}/2$ with respect to $\sigma $
is bijective. We conclude that the following three statements are equivalent:
\begin{equation}
\label{rem.3.2}
\begin{split}
\hbox{(i)}\ \ &\kappa \hbox{ is a canonical transformation,}\\
\hbox{(ii)}\  \ &1-{\cal F}/2\hbox{ is bijective},\\
\hbox{(iii)}\ \ &1+{\cal F}/2\hbox{ is bijective}.
\end{split}
\end{equation}
In what follows, we shall assume that (\ref{rem.3.2}) holds.

\bigskip
\noindent
Let $\Phi_0$ be a strictly plurisubharmonic quadratic form on ${\bf C}^n$ and let $\iota_{\Phi_0}:{\bf C}^{2n}\to {\bf C}^{2n}$ be the
corresponding antilinear involution, i.e.\ the unique antilinear map which is equal to the identity on $\Lambda_{\Phi_0}$. We shall now proceed to characterize the positivity of the canonical transformation $\kappa$ in (\ref{rem.3}) relative to $\Lambda_{\Phi_0}$. Let
$$
[\mu ,\nu ] = \frac{1}{2} b(\mu,\nu),
$$
where $b(\mu,\nu)$ has been defined in (\ref{eq2.3.2}). It is a Hermitian form and that $\kappa$ is positive relative to $\Lambda_{\Phi_0}$ precisely when
\begin{equation}\label{rem.4}
[\nu ,\nu ]\ge [\mu ,\mu ], \hbox{ for all }\nu ,\mu  \hbox{ with }\nu = \kappa (\mu ).
\end{equation}
By (\ref{rem.3}) this is equivalent to
$$
[\rho -(1/2)H_F(\rho ), \rho -(1/2)H_F(\rho )]\ge [\rho +(1/2)H_F(\rho
), \rho +(1/2)H_F(\rho )],\ \rho \in {\bf C}^{2n},
$$
or equivalently,
\begin{equation}\label{rem.5}
\Re [H_F(\rho ),\rho ]\le 0,\ \rho \in {\bf C}^{2n}.
\end{equation}

\medskip
\noindent
To simplify the following discussion, we shall make use of the invariance (exact Egorov theorem) under conjugation of $A$ in (\ref{rem.0}) with a unitary metaplectic Fourier integral operator $U:\, L^2({\bf R}^n)\to H_{\Phi_0}({\bf C}^n)$ with the associated canonical transformation $\kappa _U$, mapping
${\bf R}^{2n}$ onto $\Lambda_{\Phi_0}$. The operator $B=U^{-1}AU$ is the Weyl quantization of $e^{iG}$, where $G=F\circ \kappa _U$. Also
$\iota_{\Phi_0} =\kappa _U {\cal C} \kappa _U^{-1}$, where ${\cal C}$ is the involution associated to ${\bf R}^{2n}$, which is just the map of ordinary
complex conjugation. By abuse of notation we write $F$ also for the pull back $F\circ \kappa _U$ and we continue the discussion in the
case when $\Lambda_{\Phi_0}$ has been replaced with ${\bf R}^{2n}$ and $\iota_{\Phi_0}$ with ${\cal C}$, ${\cal C}(\rho )=\overline{\rho} $. In
this setting, (\ref{rem.5}) becomes
$$
\Im \sigma (F'_\xi (\rho ),-F'_x(\rho ); \overline{x},\overline{\xi
})\le 0,\ \forall \rho =(x,\xi )\in {\bf C}^{2n},
$$
i.e.
$$
\Im (F'_x(x,\xi )\cdot \overline{x}+F'_\xi (x,\xi )\cdot \overline{\xi
})\ge 0,\ (x,\xi )\in {\bf C}^{2n},
$$
or even more simply,
$$
\Im (F''_{\rho \rho }\rho \cdot \overline{\rho })\ge 0 .
$$
Writing $\rho =\mu +i\nu $, $\mu ,\nu \in {\bf R}^{2n}$ we see that the
last inequality is equivalent to
$$\Im F''\mu \cdot \mu  +\Im F''\nu \cdot \nu \ge 0,$$
i.e.
$$
\Im F''\ge 0,
$$
i.e. $$\Im F\ge 0\hbox{ on } {\bf R}^{2n}.$$
By the metaplectic invariance it follows that the positivity condition (\ref{rem.4}) is equivalent to
\begin{equation}
\label{rem.6}
\Im F\ge 0 \hbox{ on } \Lambda_{\Phi_0},
\end{equation}
now with the original $F$.

\medskip
\noindent
{\it Remark}. The condition (\ref{rem.6}) is quite natural since we know that for ordinary symbols instead of $e^{iF}$, the natural contour of
integration in (\ref{rem.0}) should be $\theta =(2/i)\partial _x\Phi ((x+y)/2)$, see~\cite{Sj96},~\cite{HiSj15}.

\bigskip
\noindent
We summarize the discussion in this section in the following result.
\begin{prop}
\label{prop_Weyl}
Let $F$ be a holomorphic quadratic form on $\comp^{2n}$ such that the fundamental matrix of $F$ does not have the eigenvalues $\pm 2$. Let $\Phi_0$ be a strictly plurisubharmonic quadratic form on $\comp^n$. The canonical transformation associated to the Fourier integral operator ${\rm Op}^w(e^{iF})$ is positive relative to $\Lambda_{\Phi_0}$ precisely when
\begeq
\label{rem.7}
{\rm Im}\, F|_{\Lambda_{\Phi_0}}\geq 0.
\endeq
In particular, if {\rm (\ref{rem.7})} holds, then the operator
$$
{\rm Op}^w(e^{iF}): H_{\Phi_0}(\comp^n) \rightarrow H_{\Phi_0}(\comp^n)
$$
is bounded.
\end{prop}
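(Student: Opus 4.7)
The equivalence between positivity of the canonical transformation $\kappa$ in (\ref{rem.3}) and the condition ${\rm Im}\, F|_{\Lambda_{\Phi_0}} \geq 0$ is already essentially contained in the discussion preceding the statement, so the main task is to organize that material into a proof, and then to deduce boundedness from Theorem~\ref{theo_main} together with the mapping properties of metaplectic Fourier integral operators established in Section~\ref{sect_three}.

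For the equivalence, I would proceed as follows. The hypothesis that $\pm 2$ are not eigenvalues of ${\cal F}$ is equivalent, via (\ref{rem.3.2}), to $1\pm{\cal F}/2$ being bijective, so that the relation (\ref{rem.3}) genuinely defines a complex linear canonical transformation. Writing positivity of $\kappa$ relative to $\Lambda_{\Phi_0}$ in terms of the Hermitian form $b$ of (\ref{eq2.3.2}), the condition (\ref{rem.4}) is equivalent to
\begeq
\label{appBplan1}
{\rm Re}\, [H_F(\rho),\rho] \leq 0,\quad \rho \in \comp^{2n},
\endeq
as recorded in (\ref{rem.5}). To evaluate this, I would invoke exact Egorov: choosing a unitary metaplectic FIO $U: L^2(\real^n) \rightarrow H_{\Phi_0}(\comp^n)$ quantizing a canonical transformation $\kappa_U$ mapping $\real^{2n}$ onto $\Lambda_{\Phi_0}$, we have $U^{-1} {\rm Op}^w(e^{iF}) U = {\rm Op}^w(e^{iG})$ with $G = F\circ \kappa_U$, and $\iota_{\Phi_0} = \kappa_U \circ {\cal C} \circ \kappa_U^{-1}$. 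The computation in the real case (where $[\,\cdot\,,\,\cdot\,]$ becomes the ordinary Hermitian pairing against complex conjugation) reduces (\ref{appBplan1}) by direct algebra to ${\rm Im}\, G'' \geq 0$ on $\real^{2n}$, i.e.\ ${\rm Im}\, G \geq 0$ on $\real^{2n}$, and pulling back through $\kappa_U$ gives the stated condition (\ref{rem.7}).

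For the boundedness statement, assume (\ref{rem.7}). Then by the equivalence just proven, $\kappa$ is positive relative to $(\Lambda_{\Phi_0}, \Lambda_{\Phi_0})$, and Theorem~\ref{theo_main} yields
\begeq
\label{appBplan2}
\kappa(\Lambda_{\Phi_0}) = \Lambda_{\Phi}
\endeq
for a strictly plurisubharmonic quadratic form $\Phi$ with $\Phi \leq \Phi_0$. The phase function $(x-y)\cdot \theta + F((x+y)/2,\theta)$ in (\ref{rem.0}) is holomorphic and non-degenerate in the sense of H\"ormander, generating the graph of $\kappa$, so by the results of~\cite{Sj82},~\cite{CGHS} recalled in Section~\ref{sect_three} the integral in (\ref{rem.0}) may be realized through a good contour and yields a bounded operator
$$
{\rm Op}^w(e^{iF}): H_{\Phi_0}(\comp^n) \rightarrow H_{\Phi}(\comp^n).
$$
Since $\Phi \leq \Phi_0$ we have $e^{-2\Phi_0} \leq e^{-2\Phi}$ pointwise, hence the natural inclusion $H_{\Phi}(\comp^n) \hookrightarrow H_{\Phi_0}(\comp^n)$ is a continuous linear map of norm at most one, and composing gives the desired boundedness on $H_{\Phi_0}(\comp^n)$.

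The main technical point, and the only place where some care is required, is verifying that the FIO defined by (\ref{rem.0}) through a good contour agrees, as a bounded map $H_{\Phi_0} \to H_{\Phi}$, with the formal Weyl quantization of $e^{iF}$ for which the positivity computation was performed; this is handled exactly as in the reduction to a Bergman type kernel performed in Section~\ref{sect_three}, together with metaplectic invariance of the Weyl quantization. All remaining ingredients --- the equivalence (\ref{rem.3.2}), the metaplectic reduction, and Theorem~\ref{theo_main} --- are already in place.
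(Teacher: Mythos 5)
Your proposal is correct and follows essentially the same route as the paper: Appendix \ref{appB}'s discussion preceding the statement is exactly your argument (the reduction via (\ref{rem.3.2}), the reformulation of positivity as ${\rm Re}\,[H_F(\rho),\rho]\le 0$, and the exact Egorov reduction to $\real^{2n}$ giving ${\rm Im}\, F''\ge 0$), and your deduction of boundedness from Theorem \ref{theo_main} together with the good-contour realization of the FIO and the norm-one inclusion $H_{\Phi}\hookrightarrow H_{\Phi_0}$ is precisely the intended content of the ``in particular'' clause as used in Section \ref{sect_four}.
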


\end{appendix}


\begin{thebibliography}{40}

\bibitem{AV} A. Aleman and J. Viola, {\it On weak and strong solution operators for evolution equations coming from quadratic operators}, J. Spectral Theory {\bf 8} (2018), 33–-121.

\bibitem{BB} V. M. Babich and V. S. Buldyrev, {\it Short-wavelength diffraction theory. Asymptotic methods}, Springer Series on Wave Phenomena, 4. Springer-Verlag, Berlin, 1991. xi+445 pp.

\bibitem{BC94} C. A. Berger and L. A. Coburn, {\it Heat flow and Berezin-Toeplitz estimates}, Amer. J. Math. {\bf 116} (1994), 563–-590.

\bibitem{CGHS} E. Caliceti, S. Graffi, M. Hitrik, and J. Sj\"ostrand, {\it Quadratic PT-symmetric operators with real spectrum and similarity to selfadjoint operators}, Journal of Physics A, 2012.

\bibitem{LC} L. Coburn, {\it Fock space, the Heisenberg group, heat flow and Toeplitz operators}, book chapter in preparation, 2018.

\bibitem{DSZ} N. Dencker, J. Sj\"ostrand, and M. Zworski, {\it Pseudospectra of semiclassical (pseudo-)differential operators},
Comm. Pure Appl. Math. {\bf 57} (2004), 384–-415.


\bibitem{G84} V. Guillemin, {\it Toeplitz operators in $n$ dimensions}, Integral Equations Operator Theory {\bf 7} (1984), 145–-205.

\bibitem{HW} R. F. Harvey and R. O. Wells, Jr. {\it Zero sets of non-negative strictly plurisubharmonic functions}, Math. Ann. {\bf 201} (1973), 165–-170.

\bibitem{HeSjSt} F. H\'erau, J. Sj\"ostrand, and C. Stolk, {\it Semiclassical analysis for the Kramers-Fokker-Planck equation}, Comm. Partial Differential Equations {\bf 30} (2005), 689–-760.

\bibitem{HiPr} M. Hitrik and K. Pravda-Starov, {\it Spectra and semigroup smoothing for non-elliptic quadratic operators}, Math. Ann. {\bf 344} (2009), 801–-846.

\bibitem{HiPrVi} M. Hitrik, K. Pravda-Starov, and J. Viola, {\it From semigroups to subelliptic estimates for quadratic operators}, Trans. Amer. Math. Soc., to appear.

\bibitem{HiSj15} M. Hitrik and J. Sj\"ostrand, {\it Two minicourses on analytic microlocal analysis}, "Algebraic and Analytic Microlocal Analysis", Springer, to appear.

\bibitem{HiSjVi} M. Hitrik, J. Sj\"ostrand, and J. Viola, {\it Resolvent estimates for elliptic quadratic differential operators}, Anal. PDE {\bf 6} (2013), 181–-196.

\bibitem{H60} L. H\"ormander, {\it Differential equations without solutions}, Math. Ann. {\bf 140} (1960), 169–-173.

\bibitem{H71} L. H\"ormander, {\it On the existence and the regularity of solutions of linear pseudo-differential equations},
Enseignement Math. {\bf 17} (1971), 99–-163.

\bibitem{H83} L. H\"ormander, {\it $L^2$ estimates for Fourier integral operators with complex phase}, Ark. Mat. {\bf 21} (1983), 283–-307.

\bibitem{H95} L. H\"ormander, {\it Symplectic classification of quadratic forms, and general Mehler formulas},
Math. Z. {\bf 219} (1995), 413-–449.

\bibitem{H97} L. H\"ormander, {\it On the Legendre and Laplace transformations}, Ann. Scuola Norm. Sup. Pisa Cl. Sci. {\bf 25} (1997), 517–-568.

\bibitem{MeSj2} A. Melin and J. Sj\"ostrand, {\it Bohr-Sommerfeld quantization condition for non-selfadjoint operators in dimension 2},
Ast\'erisque, {\bf 284} (2003), 181–-244.

\bibitem{P90} J. Peetre, {\it The Berezin transform and Ha-plitz operators}, J. Operator Theory {\bf 24} (1990), 165--186.

\bibitem{PRW} K. Pravda-Starov, L. Rodino, and P. Wahlberg, {\it Propagation of Gabor singularities for Schr\"odinger equations with quadratic Hamiltonians}, Math. Nachr. {\bf 291} (2018), 128--159.

\bibitem{R76} J. Ralston, {\it On the construction of quasimodes associated with stable periodic orbits}, Comm. Math. Phys. {\bf 51} (1976), 219–-242.

\bibitem{Sj74} J. Sj\"ostrand, {\it Parametrices for pseudodifferential operators with multiple characteristics}, Ark. Mat. {\bf 12} (1974), 85–-130.

\bibitem{Sj82} J. Sj\"ostrand, {\it Singularit\'es analytiques microlocales}, Ast\'ersique, 1982.

\bibitem{Sj96} J. Sj\"ostrand, {\it Function spaces associated to global I-Lagrangian manifolds}, Structure of solutions of differential equations (Katata/Kyoto, 1995), 369–423, World Sci. Publ., River Edge, NJ, 1996.

\end{thebibliography}
\end{document}